\numberwithin{equation}{section}
\theoremstyle{plain}
\newtheorem{theorem}{Theorem}[section]
\newtheorem{lemma}[theorem]{Lemma}
\newtheorem{proposition}[theorem]{Proposition}
\newtheorem{defn}[theorem]{Definition}
\theoremstyle{plain}
\numberwithin{equation}{section}
\theoremstyle{remark}
\DeclareMathOperator{\vol}{vol}
\DeclareMathOperator{\rank}{Rank}
\DeclareMathOperator{\Span}{Span}
\DeclareMathOperator{\spec}{Spec}
\DeclareMathOperator{\inte}{Int}
\DeclareMathOperator{\Length}{Length}
\begin{document}
	\date{\today}
	
	\title[Prescription of the Robin spectrum]
	{Prescription of the Robin spectrum}
	
	\author{Xiang He}
	\address{Yau Mathematical Sciences Center\\
		Tsinghua University\\
		Beijing, 100084\\ P.R. China\\}
	\email{x-he@mail.tsinghua.edu.cn}
	
	\author{Zuoqin Wang}
	\address{School of Mathematical Sciences\\
		University of Science and Technology of China\\
		Hefei, 230026\\ P.R. China\\}
	\email{wangzuoq@ustc.edu.cn}
	
	\begin{abstract}
	Let $M$ be a  compact connected smooth manifold with smooth boundary, and let $\rho$ be a positive continuous function on the boundary which is served as the Robin parameter. In this paper, we study three problems concerning the prescription of finite Robin spectrum:
		\begin{enumerate}
			\item Prescribing finitely many Robin eigenvalues and the volume.
			\item Within a fixed conformal class, prescribing the multiplicities of finitely many Robin eigenvalues.
			\item Within a fixed conformal class, prescribing finitely many distinct Robin eigenvalues and the volume.
		\end{enumerate}
		A key step in our method is to solve the corresponding problems for the Dirichlet spectrum. As a consequence, we also obtain analogous results for the Dirichlet case.
	\end{abstract}
	
	\keywords {Robin eigenvalues, Dirichlet eigenvalues, Prescription of eigenvalues, conformal geometry.}
	\maketitle
	
	\section{Introduction}
	
	Let $M$ be a compact connected smooth manifold of dimension $n$. For closed manifolds ($\partial M = \emptyset$) with $n\ge 3$, Y. Colin de Verdi\`ere's seminal work (\cite{CV}) demonstrated that given any finite non-decreasing sequence of positive numbers \[0=a_1<a_2 \le a_3 \le \cdots \le a_m,\] 
	there is a Riemannian metric $g$ on $M$ realizing this sequence as the first $m$ eigenvalues of the Laplace-Beltrami operator,
	i.e., $\lambda_i(M, g)= a_i$ for $1 \le i \le m$. J. Lohkamp (\cite{JL96}) later extended this result significantly: beyond spectral prescription, the Riemannian metric $g$ can be chosen to simultaneously prescribe both the volume $\mathrm{Vol}(M, g)$ and  the total scalar curvature $\int_M R_g dV_g$, where $R_g$ denotes the scalar curvature of $(M, g)$.  
	
	For compact manifold $M$ with non-empty smooth boundary $\partial M$, the Laplace-Beltrami operator is typically studied under two canonical boundary conditions: the  Dirichlet boundary condition and the Neumann boundary condition. While Colin de Verdi\`ere's method \cite{CV} adapts naturally  to the Neumann case and yielding  analogous  spectral prescription results for the first $m$ Neumann eigenvalues,  its direct application to Dirichlet boundary conditions faces significant obstructions. In our recent work \cite{HW} we resolves this limitation by developing some new techniques, establishing that for any $m \in \mathbb N$, there exists Riemannian metrics $g$ on $M$ that realize the arbitrary prescribed first $m$ Dirichlet eigenvalues and has the pre-assigned volume simultaneously. The problem of prescribing eigenvalues for various other operators has also been studied. P. Guerini considered the Hodge–de Rham operator in \cite{Gu04}; M. Dahl studied the Dirac operator in \cite{Da05}; and P. Jammes investigated a range of operators, including the Hodge–de Rham operator, the Witten Laplacian, and the Steklov operator, in \cite{PJcon,PJ09,PJJGA,PJ,PJ12,PJSte}.

	There also many other natural boundary conditions for the Laplacian that has been studied extensively, among which the Robin boundary condition occupies a distinctive position due to its hybrid nature --- interpolating between Dirichlet and Neumann conditions via a spectral parameter. This condition, critical in modeling heat transfer phenomena with convective surface interactions, introduces unique analytical challenges for spectral prescription. In this work, we address the following question: Can one simultaneously prescribe the Robin eigenvalues, geometric quantities, and boundary interaction parameters for a compact Riemannian manifold?
	
	Before stating our results, we fix some notations:
	\begin{itemize}
		\item  Let $\mathrm{d} V_g$ be the  volume form on $(M, g)$, and $\mathrm{d} \sigma_g$  the induced volume form on $\partial M$. 
		\item Let $\rho \in C_+(\partial M)=\{\rho\in C(\partial M)|\ \rho(x)>0,\ \forall x\in \partial M\}$ be the Robin parameter (which is a function instead of a constant throughout this paper) governing boundary interaction strength.
		\item Let 	
		\[
		0< \lambda_1(M, g,\rho) < \lambda_2(M, g,\rho) \leq \cdots \to \infty
		\]
		be the Robin spectrum of the triple $(M, g, \rho)$, namely the spectrum of the Laplace-Beltrami operator  under  the Robin boundary condition $\partial_\nu f+\rho f=0$, where $\nu$ is the unit external normal vector on $\partial M$. 
	\end{itemize}  
	According to the standard spectral theory, the Robin Laplacian  $\Delta(M,g,\rho)$ can be realized as the Friedrichs extension of the following quadratic form on $H^1(M)$,
	\begin{equation}
		q_\rho (f)= \int_M |\nabla_g f|^2 \mathrm{d} V_g+ \int_{\partial M} \rho \cdot f^2 \mathrm{d} \sigma_g.
	\end{equation}
	Note that we always have  $\lambda_1(M, g,\rho) < \lambda_2(M, g,\rho) $ since we took $M$ to be connected. 
	
	Our first theorem is an extension of the main result in \cite[Theorem 1.2]{HW} and establishes a triple prescription result in the Robin setting: one can  simultaneously prescribe the Robin parameter, the first $m$ Robin eigenvalues and the volume $\mathrm{Vol}(M, g)$:
	\begin{theorem}\label{prevalueRobin}
		Let $M$ be a compact connected smooth $n$-manifold  ($n \ge 3$) with non-empty boundary. For any prescribed data
		\begin{itemize}
			\item Ascending finite  sequence    $0<a_1<a_2\leq \cdots \leq a_m$,
			\item Boundary Robin parameter $\rho\in C_+(\partial M)$,
			\item Target volume $V>0$,
		\end{itemize}
		there exists a Riemannian metric $g$ on $M$ such that $\mathrm{Vol}(M, g)=V$ and $\lambda_k(M, g,\rho)=a_k$ for all $1\leq k\leq m$.
	\end{theorem}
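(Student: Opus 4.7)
The plan is to adapt the gluing-and-perturbation construction of Colin de Verdi\`ere \cite{CV}, in the boundary-aware form developed in \cite{HW}, to the Robin setting. The governing idea is to realise the prescribed eigenvalues on an auxiliary closed piece grafted into the interior of $M$, while arranging that the original Robin problem on $M$ only contributes eigenvalues well above the target range.

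\textbf{Preparation.} First, using Colin de Verdi\`ere's theorem, fix a metric $g_Y$ on a closed $n$-manifold $Y$ (say $Y=S^n$) whose Laplace spectrum begins with $0,a_1,\dots,a_m$ and has a spectral gap $\lambda_{m+1}(Y,g_Y)>a_m+1$. Independently, choose a reference metric $g_M$ on $M$ with $\lambda_1(M,g_M,\rho)>a_m+1$. Such a $g_M$ exists by scaling: under $g\mapsto c^2g$ one computes
\[
\lambda_k(M,c^2g,\rho)=c^{-2}\lambda_k(M,g,c\rho),
\]
and first-order perturbation in $c$ gives $\lambda_1(M,g,c\rho)\sim c\cdot\int_{\partial M}\rho\,d\sigma_g/\mathrm{Vol}(M,g)$, so that $\lambda_1(M,c^2g,\rho)\sim c^{-1}\cdot\mathrm{const}\to\infty$ as $c\to0$; the higher Robin eigenvalues diverge like $c^{-2}\mu_k$ where $\mu_k$ is the $k$-th Neumann eigenvalue.

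\textbf{Gluing and fine-tuning.} Next, form the thin-neck connected sum of $(M,g_M)$ and $(Y,g_Y)$ at interior points, producing a family $\tilde g_\epsilon$ of metrics on a manifold diffeomorphic to $M$, equal to $g_M$ and $g_Y$ outside small balls and with $\partial M$ and the function $\rho$ untouched. A standard capacity/variational decoupling (valid for $n\ge 3$, since thin tubes have vanishing capacity) should yield
\[
\lambda_k(\tilde M,\tilde g_\epsilon,\rho)\longrightarrow \lambda_k^{\sqcup}(M,g_M,\rho;\,Y,g_Y) \qquad (\epsilon\to0),
\]
where the right-hand side is the $k$-th element of the sorted union of the Robin spectrum of $(M,g_M,\rho)$ and the Laplace spectrum of $(Y,g_Y)$. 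By the preparation step, this union begins with $a_1,\dots,a_m$, so for small $\epsilon$ the first $m$ Robin eigenvalues of $\tilde g_\epsilon$ are $o(1)$-close to the targets. A Colin de Verdi\`ere-type transversality argument applied to a finite-parameter family of metric perturbations supported near $Y$, whose Jacobian on the simple eigenvalues is surjective (by explicit first-order perturbation formulae), then solves $\lambda_k=a_k$ exactly via the inverse function theorem. Finally the volume $V$ is reached by grafting an additional small bubble, whose contribution to the first $m$ eigenvalues is $o(1)$ and is absorbed into the fine-tuning step.

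\textbf{Main obstacle.} The delicate ingredient is the spectral decoupling across the thin neck in the presence of the Robin boundary term. One must control $\int_{\partial M}\rho f^2\,d\sigma$ uniformly for near-eigenfunctions $f$ of the glued problem, since \emph{a priori} such $f$ could concentrate near $\partial M$ and spoil the convergence. The gap $\lambda_1(M,g_M,\rho)>a_m+1$ from the preparation step is crucial here: a Rayleigh quotient comparison shows that any trial function with eigenvalue at most $a_m+\tfrac12$ must carry almost all of its $L^2$ mass on the $Y$-side of the neck, whence the boundary trace inequality forces the Robin contribution to vanish as $\epsilon\to 0$. Making this decoupling rigorous, and carefully justifying the large-first-Robin-eigenvalue estimate in the preparation step (where the prescribed function $\rho$ must be kept fixed, so the scaling lemma above does not act trivially), are the principal technical steps. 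Once both are in hand, the Colin de Verdi\`ere inverse-function argument and the volume adjustment proceed as in \cite{CV,HW}.
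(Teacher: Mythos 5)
There is a genuine gap, and it sits at the heart of your gluing step. If you graft a closed manifold $(Y,g_Y)$ into the \emph{interior} of $M$ by a thin neck, the spectrum of $(Y,g_Y)$ necessarily contains $0$ (constant functions), and this zero mode survives the gluing as a spurious low Robin eigenvalue: take the test function equal to $1$ on $Y$, cut off along the neck, and zero on the $M$-side; its gradient is supported in the neck (whose capacity vanishes as $\epsilon\to 0$ for $n\ge 3$) and its boundary term vanishes because it is zero near $\partial M$, so its Rayleigh quotient tends to $0$. Hence the sorted union you converge to begins with $0,a_1,\dots$, not with $a_1,\dots,a_m$: the glued metrics satisfy $\lambda_1(\tilde M,\tilde g_\epsilon,\rho)\to 0<a_1$, and no finite-dimensional fine-tuning supported near $Y$ can remove this eigenvalue, since every metric on a closed piece attached through a thin neck reproduces it. This is precisely the obstruction (noted in the introduction of the paper) that prevents a direct transplantation of Colin de Verdi\`ere's closed/Neumann construction to Dirichlet or Robin conditions, where $\lambda_1>0$. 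Your volume step inherits the same defect: an interior ``bubble'' attached by a thin neck again contributes an eigenvalue tending to $0$. (Your preparation step, by contrast, is fine: shrinking the metric with $\rho$ fixed does drive $\lambda_1(M,c^2g,\rho)\to\infty$.)

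The paper avoids this entirely by never re-running the gluing construction for Robin conditions. Instead it reduces to the Dirichlet prescription already established in \cite{HW} (whose construction is designed to kill the zero mode, e.g.\ via pendant edges with Dirichlet vertices on $\partial M$): first, a Lohkamp-type conformal deformation concentrated in a boundary collar (Proposition \ref{DtoR}) shows it suffices to treat a Robin parameter $\bar\rho=C$ with $C$ arbitrarily large; second, the convergence $\lambda_k(M,g,\rho)\to\lambda_k^{\mathcal D}(M,g)$ as $\rho\to\infty$, upgraded to smallness of the $N$-spectral difference (Proposition \ref{Diri-Robin}), shows that the stable family of metrics from \cite{HW} for the Dirichlet problem remains stable for the Robin problem with large constant parameter; stability then yields exact prescription, and the volume is fixed by a scaling factor close to $1$. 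To salvage your approach you would have to attach the auxiliary pieces to $\partial M$ (so that the Robin/Dirichlet condition eliminates their constant modes) and replace the closed model $Y$ by a model whose lowest eigenvalue under the induced mixed condition is $a_1$, which is essentially the route taken in \cite{HW} and in Section \ref{proof2} of this paper.
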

    The strategy of the proof is to reduce the Robin eigenvalue prescription to the resolved Dirichlet case. This is accomplished in two steps. First, following the strategy of \cite[Theorem 3.6]{PJSte}, we  demonstrate that it suffices to establish the result for $\rho \ge \rho_0$, where $\rho_0$ is a sufficiently large lower bound. Second, we analyze  the asymptotic relation   
    \[\lambda_k(M,g,\rho) \stackrel{\rho \to +\infty}{\longrightarrow}\lambda_k^\mathcal{D}(M,g),\] 
    where $\lambda_k^\mathcal{D}(M,g)$ is the $k$-th eigenvalue of the Dirichlet Laplacian, to  bridge  our previous  Dirichlet prescription results \cite{HW} to the current Robin framework.

    In contrast to higher-dimensional manifolds, surfaces exhibit intrinsic spectral limitations due to topological constraints. Let $X$ be a compact surface with $b \ge 1$ boundary components and Euler characteristic $\chi(X)$. When $\chi(X)+b <0$, Berdnikov established in  \cite{Be18} the following multiplicity bound,
    \[\mathrm{mult}(\lambda_k(X)) \le 2k -2(\chi(X)+b)+1, \quad \forall k \ge 1.\]
	This inequality imposes a fundamental restriction: arbitrary finite spectral prescription becomes impossible on such surfaces. 
	However, by following  the same strategy used in the proof of Theorem \ref{prevalueRobin}, we are still able to  extend   the main result in \cite[Theorem 1.1]{H} to the Robin case and  obtain the following partial prescription result:
	
	\begin{theorem}\label{2dimRobin}
		Let $X$ be a compact connected surface with non-empty boundary.  For any prescribed data
		\begin{itemize}
			\item Strictly increasing finite  sequence    $0<a_1<a_2< \cdots < a_m$
			\item Boundary Robin parameter $\rho\in C_+(\partial X)$
			\item Target area $A>0$
		\end{itemize}
		there exists a Riemannian metric $g$ on $X$ such that $\mathrm{Area}(X, g)=A$ and $\lambda_k(X, g,\rho)=a_k$ for all $1\leq k\leq m$. 
	\end{theorem}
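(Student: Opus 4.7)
The plan is to adapt \emph{verbatim} the two-step reduction scheme that the authors outlined for Theorem~\ref{prevalueRobin}, replacing the higher-dimensional Dirichlet prescription of \cite{HW} with its surface counterpart \cite[Theorem 1.1]{H}. The overall goal is to convert the Robin prescription problem on $X$ into a Dirichlet prescription problem to which the surface result of \cite{H} applies, and then to control the discrepancy between the two spectra through the large-$\rho$ asymptotics.

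For the first step, I would establish that it suffices to prove the theorem under the additional hypothesis that $\rho \ge \rho_0$ for an arbitrarily large threshold $\rho_0 = \rho_0(X, a_m)$. Following the spirit of \cite[Theorem 3.6]{JL96}, this is achieved by a localized conformal perturbation of any candidate metric in a thin collar of $\partial X$: one constructs a boundary modification whose net effect on the Robin quadratic form $q_\rho$ is to replace $\rho$ by $\rho + \rho_0$, while controlling the perturbation of both the area and the bulk eigenvalue data. Given such a replacement, a prescription problem for an arbitrary $\rho \in C_+(\partial X)$ can be recovered from the corresponding problem for $\rho + \rho_0$. For the second step, I would exploit the convergence $\lambda_k(X, g, \rho) \to \lambda_k^{\mathcal D}(X, g)$ as $\min \rho \to \infty$. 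First, apply \cite[Theorem 1.1]{H} to obtain a metric $g_0$ on $X$ with area $A$ whose first $m$ Dirichlet eigenvalues are exactly $a_1, \dots, a_m$. Then introduce a suitable finite-dimensional family $\{g_s\}$ of area-preserving deformations of $g_0$, and study the map $s \mapsto \bigl(\lambda_1(X, g_s, \rho), \dots, \lambda_m(X, g_s, \rho)\bigr)$. For $\rho$ sufficiently large, this map is a small perturbation of its Dirichlet analogue, and a Brouwer degree or implicit function argument --- identical in structure to the one outlined for Theorem~\ref{prevalueRobin} --- produces a parameter $s^\star$ achieving $\lambda_k(X, g_{s^\star}, \rho) = a_k$ for $1 \le k \le m$.

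The main obstacle is the rigidity imposed on surfaces by Besson's multiplicity bound: the prescribed sequence may have repeated entries, and the bridging argument in Step 2 must preserve the exact multiplicity pattern rather than merely matching eigenvalues numerically. Ensuring that a cluster of equal Dirichlet eigenvalues of $g_0$ perturbs to a cluster of equal Robin eigenvalues --- rather than splitting into distinct values or colliding with adjacent eigenvalues --- requires choosing the deformation family so that, on each eigencluster, the linearized map from deformation parameters to Robin eigenvalues has the right rank, and choosing $\rho_0$ large enough that the Robin--Dirichlet discrepancy is smaller than the gaps between distinct values in the prescribed sequence. I expect this to be the principal source of technical difficulty and the point where $\dim X = 2$ genuinely constrains the argument; a secondary technicality is to guarantee that area can be pinned to $A$ exactly, which should follow from an overall rescaling that is absorbed into the choice of $\rho_0$ in Step 1.
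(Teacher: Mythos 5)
Your proposal follows essentially the same route as the paper: reduce to a large Robin parameter by a Lohkamp-style conformal modification concentrated near $\partial X$ (the paper's Proposition \ref{DtoR}), then use the convergence of Robin eigenvalues to Dirichlet eigenvalues as the parameter grows (Proposition \ref{Diri-Robin}) to transfer the surface Dirichlet prescription of \cite[Theorem 1.1]{H}, fixing the area by a small rescaling that does not disturb stability. The only deviation is cosmetic: rather than constructing a fresh area-preserving deformation family around $g_0$ and verifying rank conditions, the paper simply reuses the stable family of metrics already built in the proof of \cite[Theorem 1.1]{H} (stability, phrased via quadratic forms on the $N$-eigenspace, survives small $N$-spectral differences), and since the prescribed sequence in this theorem is strictly increasing, your worry about preserving eigenvalue clusters does not actually arise.
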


	For the rest of this paper, we turn to eigenvalue prescription within fixed conformal classes. The analogous problems for the Hodge–de Rham Laplacian on closed manifolds and for Steklov eigenvalues were first studied by P. Jammes in \cite{PJcon} and \cite{PJSte}, respectively.
	
	So we fix a compact connected Riemannian $n$-manifold ($n \ge 3$) with boundary, $(M, g)$, and a positive boundary Robin parameter $\rho \in C_+(\partial M)$. Define the {\sl distinct} Robin eigenvalues (excluding multiplicities) as:
	\[
	0< \Lambda_1(M,g, \rho) < \Lambda_2(M,g, \rho) < \Lambda_3(M,g, \rho) < \cdots \to \infty,
	\]
	Our next result shows that the multiplicity of Robin eigenvalues can be prescribed by Riemannian metrics within given conformal class:	
	\begin{theorem}\label{premulRobin}
		Let $M$ be a compact connected smooth $n$-manifold  ($n \ge 3$) with non-empty boundary. For any 
		\begin{itemize}
			\item Multiplicity sequence $\{m_1,\cdots,m_s\}$ in $\mathbb{N}$ 
			\item Boundary Robin parameter $\rho\in C_+(\partial M)$ 
			\item Conformal class $[g]$
		\end{itemize}
		there exists a Riemannian metric   $\tilde g$ inside the conformal class $[g]$ such that   the multiplicity of $\Lambda_{k+1}(M,\tilde g,\rho)$ is $m_k$ for all $1\leq k\leq s$.
	\end{theorem}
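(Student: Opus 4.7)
The plan is to reduce the problem to the analogous Dirichlet multiplicity prescription within the same conformal class, following the blueprint suggested in the introduction, and then transfer back via the large-$\rho$ asymptotic relation $\lambda_k(M,\tilde g,\rho)\to\lambda_k^{\mathcal{D}}(M,\tilde g)$ already exploited in the proof of Theorem~\ref{prevalueRobin}. Execution proceeds in three steps.

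First, I would establish the intermediate result: for any multiplicity sequence $\{m_1,\ldots,m_s\}$ and any conformal class $[g]$ there exists $\tilde g_0\in[g]$ such that the multiplicity of $\Lambda_{k+1}^{\mathcal{D}}(M,\tilde g_0)$ equals $m_k$ for $1\le k\le s$. My approach is a Colin de Verdi\`ere style graph approximation adapted to the conformal setting: pick a finite weighted graph $G$ whose Laplacian realizes the prescribed multiplicity pattern (for instance a Cayley graph of a finite group whose irreducible representations have the correct dimensions), embed $G$ as a $1$-complex in $\mathrm{int}(M)$ well away from $\partial M$, and choose a conformal factor $e^{2\varphi}$ inside $[g]$ that concentrates the mass of the metric on a thin tubular neighborhood of $G$ while making $\tilde g_0$ very small elsewhere. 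Standard Colin de Verdi\`ere estimates then show that the first $1+m_1+\cdots+m_s$ Dirichlet eigenvalues of $\Delta_{\tilde g_0}$ approximate those of the weighted graph Laplacian, and a transversality argument in the space of conformal factors sharpens the approximation to an exact equality of multiplicity patterns. The dimension hypothesis $n\ge 3$ enters here exactly as in \cite{CV}.

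Next, I would transfer the result to the Robin side. The key identity is the scaling
\[
\lambda_k(M,c^2\tilde g_0,\rho)=c^{-2}\lambda_k(M,\tilde g_0,c\rho),
\]
which keeps us inside $[g]$ and shows that constant dilations inside the conformal class are equivalent to multiplying the Robin parameter by $c$. As $c\to\infty$ the right hand side converges to $c^{-2}\lambda_k^{\mathcal{D}}(M,\tilde g_0)$. Moreover the graph concentration built into $\tilde g_0$ forces the relevant low eigenfunctions to be uniformly small near $\partial M$, so the Robin boundary term in the quadratic form $q_\rho$ is negligible and the Robin spectrum is already close to the Dirichlet spectrum of $\tilde g_0$ before any dilation is performed.

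The main obstacle I expect is the passage from \emph{clustering} of Robin eigenvalues near degenerate Dirichlet values to \emph{exact} prescribed multiplicities: generically a Dirichlet cluster of size $m_k$ splits into several simple Robin eigenvalues as $c$ decreases. To control this I would carry out a perturbation analysis inside $[g]$ combining the (approximate) symmetry of the graph-like core with an Arnol'd--Colin de Verdi\`ere type transversality argument, now restricted to the smaller family of conformal perturbations. Verifying the required transversality with only conformal deformation directions available is, in my view, the most delicate part of the argument, and it is here that the choice of the graph $G$ must be made compatible with the available conformal freedom in order to ensure that the symmetry responsible for each multiplicity $m_k$ survives the Dirichlet-to-Robin passage.
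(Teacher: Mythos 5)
Your overall strategy (graph model plus conformal concentration, then a large--Robin--parameter reduction to the Dirichlet case) is the same as the paper's, but the two steps on which the whole argument turns are exactly the ones you leave unverified, and as stated they are genuine gaps rather than routine details. First, in the Dirichlet step you ask for a graph whose Laplacian already realizes the multiplicity pattern (e.g.\ a Cayley graph) and then invoke ``standard Colin de Verdi\`ere estimates'' plus an unspecified transversality argument ``in the space of conformal factors.'' Spectral approximation alone never gives exact multiplicities; what is needed is that the family of admissible perturbations --- here only conformal ones, further constrained by the requirement that the tube geometry be produced without leaving $[g]$ --- submerses onto the space of quadratic forms on the degenerate eigenspace of the model. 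You yourself identify this as the delicate point and do not prove it. The paper does not try to protect a symmetric graph's multiplicities; instead it builds one specific quantum graph $G_N$ (complete graph with pendant edges whose endpoints carry Dirichlet conditions and are placed \emph{on} $\partial M$, meeting it perpendicularly), computes that $\Lambda_2$ has multiplicity $N-1$, and proves in Proposition \ref{submer} that edge-length perturbations give a submersion onto $Q(E_0)$, so the single cluster can be split into any pattern $m_1,\dots,m_s$. The conformal-class constraint is then handled by Jammes' trick: embed the metric graph isometrically for some $g_x\in[g]$, perform the non-conformal normalization to the Euclidean model $\Omega^x_\varepsilon$ only up to a factor $\tau_\varepsilon\to 1$, and compare via Proposition \ref{graph-domain} and \cite[Lemma 5.1]{HW}, ending with a metric $f^h_{x,\varepsilon}\cdot g_x\in[g]$. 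A conformal factor by itself cannot manufacture the thin-tube geometry (it scales cross-section and length together), so your ``concentrate the mass on a thin tubular neighborhood'' needs precisely this mechanism, which is absent from your sketch.

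Second, your Robin transfer is incomplete and partly wrong. The scaling identity $\lambda_k(M,c^2\tilde g_0,\rho)=c^{-2}\lambda_k(M,\tilde g_0,c\rho)$ is correct and is a legitimate substitute, for the multiplicity problem, for part of Proposition \ref{DtoR}; but the parenthetical claim that a negligible boundary term makes the Robin spectrum close to the \emph{Dirichlet} spectrum is backwards --- killing the boundary term pushes the Robin form toward the Neumann form, not the Dirichlet one; closeness to Dirichlet comes only from a \emph{large} effective Robin parameter. More importantly, eigenvalue closeness as $c\to\infty$ cannot by itself prevent a degenerate Dirichlet cluster from splitting into simple Robin eigenvalues, which you acknowledge and again defer to an unproven transversality statement. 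The paper closes this gap by proving closeness at the level of quadratic forms on the $N$-eigenspace (the $N$-spectral difference estimate of Proposition \ref{Diri-Robin}), so that the stable family constructed for the Dirichlet problem remains stable for the Robin problem with large constant parameter, and the interior-point argument can be run directly for the Robin forms; Proposition \ref{DtoR} (a conformal boundary-layer deformation, hence class-preserving) then handles arbitrary $\rho\in C_+(\partial M)$. Until you supply analogues of Proposition \ref{submer} and Proposition \ref{Diri-Robin} --- i.e.\ the submersion for conformal deformations and the form-level Dirichlet--Robin comparison --- the proposal does not yield exact multiplicities on either side of the reduction.
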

	
	Again we reduce the Robin eigenvalue multiplicity prescription problem to the analogous Dirichlet eigenvalue multiplicity prescription problem. We remark that the Dirichlet eigenvalue multiplicity prescription problem was first proposed in \cite{HKP} as an open problem, and was answered in a stronger form  in our earlier work  \cite{HW}. It seems to us that the problem ``prescribing the multiplicity within the given conformal class" is more natural, which we will answer in this paper. 
	
	In fact, when $M$ is a closed manifold with $\dim M \ge 3$, Y. Colin de Verdière has already solved the problem of prescribing the multiplicity of the first nonzero Laplacian eigenvalue in \cite{CV2}. Moreover, the argument in \cite{CV2} shows that one can prescribe the multiplicities of finitely many eigenvalues.
		
	To complete the proof of Theorem \ref{premulRobin}, we also follow the strategy of \cite{CV2}, with suitable modifications to address Dirichlet eigenvalues and to incorporate the method from \cite{PJSte}, which ensures that the metric remains in a fixed conformal class. More precisely, inspired by \cite{CV2} we construct a quantum graph $(G_N^{\gamma^0},\Delta_{\gamma^0})$ with a stable metric $\gamma^0$ and high  second eigenvalue multiplicity. Then for any metric $\gamma$ near $\gamma^0$, we construct a corresponding metric $g_\gamma$ on $M$ such that the spectral difference between the Dirichlet Laplacian of $(M, g_\gamma)$ and $(G_N^{\gamma}, \Delta_\gamma)$ is very small. Finally one may utilize the stability of $\gamma^0$ to ensure the  existence of the desired metric on $M$. The challenge here is that the standard quantum graph approach used in \cite{CV2} does not inherently fix the conformal class. To solve this problem we adopt the technique from the proof of \cite[Theorem 1.2]{PJSte}.

    It is well known that for the Neumann eigenvalue (or the closed eigenvalue is the manifold is closed) $\mu_k(M, g)$, one cannot simultaneously prescribe the eigenvalue and the volume inside given conformal class, since  $\mu_k(M,g)\cdot \vol(M,g)^{\frac 2n}$  has an upper bound for metrics within any fixed conformal class; see \cite{LY}, \cite{EI}, \cite{Kor93}, \cite{Has}. 
    However, our last result in this paper reveals a fundamental distinction between the spectral behaviors of the Robin/Dirichlet boundary conditions versus the Neumann/closed cases. 
	\begin{theorem}\label{prediffeigen}
			Let $M$ be a compact connected smooth $n$-manifold  ($n \ge 3$) with non-empty boundary. For any prescribed data
			\begin{itemize}
				\item  Strictly increasing finite  sequence   $0<a_1<a_2< \cdots < a_m$
				\item Boundary Robin parameter $\rho\in C_+(\partial M)$
				\item Target volume $V>0$
				\item Conformal class $[g]$
			\end{itemize}
			there exists a Riemannian metric  $\tilde g$ in $[g]$ such that $\mathrm{Vol}(M, \tilde g)=V$ and $\lambda_k(M, \tilde g,\rho)=a_k$ for all $1\leq k\leq m$.
	\end{theorem}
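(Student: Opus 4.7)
The plan is to follow the two-stage reduction scheme already exploited for Theorems \ref{prevalueRobin} and \ref{premulRobin}. Stage A: establish the Dirichlet analogue of Theorem \ref{prediffeigen} inside the conformal class $[g]$ with the prescribed volume. Stage B: transfer from Dirichlet to Robin via the large-parameter asymptotics $\lambda_k(M,g,\rho) \to \lambda_k^{\mathcal{D}}(M,g)$. The conformal class is preserved throughout, since every metric deformation used in both stages is taken inside $[g]$, and the reduction from Robin to Dirichlet will be done via a constant rescaling of the metric (which of course stays inside $[g]$).

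For Stage A the goal is: given $0 < a_1 < \cdots < a_m$ and $V > 0$, find $\tilde g \in [g]$ with $\mathrm{Vol}(M, \tilde g) = V$ and $\lambda_k^{\mathcal{D}}(M, \tilde g) = a_k$ for all $1 \le k \le m$. I would adapt the quantum graph technology underlying the proof of Theorem \ref{premulRobin}. First, construct a quantum graph $(G_N^{\gamma^0}, \Delta_{\gamma^0})$ with Dirichlet vertex conditions whose first $m$ eigenvalues are simple and exactly equal to the prescribed $a_k$. Such a $\gamma^0$ exists because on a finite metric graph with sufficiently many edges the first $m$ eigenvalues depend smoothly on the edge lengths, and by choosing the topology and the base point appropriately the Jacobian of the edge-length-to-eigenvalue map can be made to have full rank at $\gamma^0$ (the ``stable'' base point). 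Second, using the same conformal-class-preserving thin-tubular surgery developed for Theorem \ref{premulRobin} (itself modeled on \cite[Theorem 1.2]{PJSte}), to every edge length $\gamma$ near $\gamma^0$ I associate a metric $g_\gamma \in [g]$ with $\mathrm{Vol}(M, g_\gamma) = V$ whose first $m$ Dirichlet eigenvalues approximate those of $(G_N^\gamma, \Delta_\gamma)$ arbitrarily well. Finally, the implicit function theorem applied at $\gamma^0$, using its stability, produces a $\gamma$ close to $\gamma^0$ for which $\lambda_k^{\mathcal{D}}(M, g_\gamma) = a_k$ for all $k$, and we set $\tilde g = g_\gamma$.

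For Stage B I proceed exactly as sketched after Theorem \ref{prevalueRobin}. First, an argument modeled on \cite[Theorem 3.6]{JL96} combined with a constant rescaling $g \mapsto c^2 g$ (which acts on Robin eigenvalues by $\lambda_k(M, c^2 g, \rho) = c^{-2} \lambda_k(M, g, c\rho)$ and preserves $[g]$) reduces the problem to the case where $\rho$ is replaced by $t \rho$ with $t$ sufficiently large. Then, using $\lambda_k(M, g, t\rho) \to \lambda_k^{\mathcal{D}}(M, g)$ as $t \to \infty$ together with a perturbation argument carried out entirely inside $[g]$ (again via the quantum graph parameter $\gamma$), the Dirichlet prescription of Stage A upgrades to a Robin prescription with parameter $\rho$, volume $V$, and metric in $[g]$.

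The main obstacle is the construction of $g_\gamma$ in Stage A: one must realize the thin-tubular model of $(G_N^\gamma, \Delta_\gamma)$ entirely through variations of the conformal factor of $g$ while simultaneously enforcing $\mathrm{Vol}(M, g_\gamma) = V$. This is a more delicate variant of the conformal-class surgery used for Theorem \ref{premulRobin}, and requires uniform resolvent estimates in the edge-length parameter $\gamma$ together with careful control of the conformal factor away from the surgery region so that the spectral approximation and the volume constraint are both guaranteed as $\gamma$ varies in a neighborhood of $\gamma^0$.
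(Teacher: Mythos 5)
Your Stage B is essentially the paper's reduction (Propositions \ref{DtoR} and \ref{Diri-Robin}), although the parenthetical ``constant rescaling'' adds nothing: since $\lambda_k(M,c^2g,\rho)=c^{-2}\lambda_k(M,g,c\rho)$, rescaling the metric merely conjugates the problem (targets become $c^2a_k$, volume $c^{-n}V$), so the effective size of the Robin parameter relative to the metric is unchanged; the real work is done by the non-constant boundary-layer conformal factor of the Lohkamp-type argument you also cite, which does stay in $[g]$.

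The genuine gap is in Stage A. You posit a quantum graph whose first $m$ eigenvalues are \emph{exactly} the prescribed $a_1<\cdots<a_m$, justified by smooth dependence on edge lengths and a full-rank Jacobian ``by choosing the topology and the base point appropriately.'' This is precisely the ingredient the paper states it does not have (see the remark after Theorem \ref{prediffeigen}: there is no universal quantum graph model replicating an arbitrary finite sequence as the initial part of its spectrum), and even granting a full-rank Jacobian at some $\gamma^0$ you would only get local surjectivity around whatever spectrum $\gamma^0$ happens to have --- you never explain how the image reaches an \emph{arbitrary} target sequence, which begs the question. The paper circumvents this entirely: it takes $m$ \emph{disjoint} copies of the fixed graph $G_N$, each rescaled by $\arccos\frac{N-1}{N}/\sqrt{a_i}$ so that its simple first eigenvalue is $a_i$, with $N$ chosen so large that $\Lambda_2/\Lambda_1$ of $G_N^{\gamma^0}$ forces every second eigenvalue above $a_m+1$; stability is then only needed for the simple first eigenvalue of each copy (Proposition \ref{simple stable}), not for an $m$-tuple of eigenvalues of a single graph. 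Relatedly, your treatment of the volume constraint (``control of the conformal factor away from the surgery region'') omits the device that makes the spectral approximation survive: the paper scales by $\alpha$ and carves out a domain $\Omega$ of volume exactly $V$, a union of thin tubes meeting $\partial M$ whose first mixed eigenvalue exceeds $a_m+1$, disjoint from the embedded graphs, so that \cite[Lemma 5.1]{HW} applies to the pair $(\Omega\cup\bigcup_i\Omega^{x_i}_\varepsilon,\,M)$ and the rest of $M$ contributes no spectrum below $a_m+1$. Without such a mechanism your construction has no control over low eigenvalues generated by the bulk of $(M,\tilde g)$ carrying the prescribed volume.
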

	
	Again we convert  the problem  to the case of Dirichlet boundary condition, i.e. the following theorem:
	\begin{theorem}
		Let $M$ be a compact connected smooth $n$-manifold  ($n \ge 3$) with non-empty boundary. For any prescribed data
		\begin{itemize}
			\item  Strictly increasing finite  sequence   $0<a_1<a_2< \cdots < a_m$
			\item Target volume $V>0$
			\item Conformal class $[g]$
		\end{itemize}
		there exists a Riemannian metric  $\tilde g$ in $[g]$ such that $\mathrm{Vol}(M, \tilde g)=V$ and $\lambda_k^\mathcal{D}(M, \tilde g)=a_k$ for all $1\leq k\leq m$, where $\lambda_k^\mathcal{D}(M, \tilde g)$ is the k-th Dirichlet eigenvalue of $(M,\tilde g)$.
	\end{theorem} 
	\noindent The proof is built upon the ideas  established in Theorem \ref{premulRobin}, together with two new techniques. First, instead of   prescribing the multiplicity of eigenvalues, we prescribe $m$ distinct eigenvalues by embedding $m$ quantum graphs geometrically into $M$. Second, we simultaneously prescribe the volume by appropriately scaling the metric. This permits us to identify a domain $\Omega$ in $M$ with volume $V$ and whose first Laplacian eigenvalue under a specific mixed boundary condition is significantly large. Moreover, to apply the standard machine of spectral convergence, it is necessary for $\Omega$ to be spatially disjoint from the image of the quantum graphs embedded in  $M$.

We end with two remarks on the limitation of our last result: 
		\begin{enumerate}
			\item   Our method in proving Theorem \ref{prediffeigen}  does not apply to  surfaces ($n=2$), since we need \cite[Lemma 5.1]{HW} which  requires $n\geq3$
			\item  The no-multiplicity assumption in Theorem \ref{prediffeigen}  cannot be removed, since we don't have a
 universal quantum graph model that can replicate arbitrary finite eigenvalue sequences as initial  part of its eigenvalues. 
		\end{enumerate}

	The paper is arranged as follows. In Section \ref{Qgraph} we construct the quantum graph $(G_N^\gamma, \Delta_\gamma)$ and establish several key technical results that will be used in the subsequent proof.  Section \ref{proof1} contains complete proofs of Theorems \ref{prevalueRobin} and \ref{2dimRobin}.
 Section \ref{proof2} presents the proof of Theorems \ref{premulRobin} and \ref{prediffeigen}. The Appendix \ref{app} provides foundational material on the theory of spectral convergence and stable metric.

	\section{The quantum graph $(G_N^\gamma, \Delta_\gamma)$}\label{Qgraph}
	
	In this section, we construct a quantum graph whose second eigenvalue has a large multiplicity. Moreover, under perturbations of the edge length parameters, the eigenspace associated with the second eigenvalue splits but remains stable in the sense of the quadratic form.
	
	For any positive integer $N$, let $G_N=(V,E)$ be the graph formed by adding $N$ boundary vertices to the complete graph on $N$ interior vertices, with each boundary vertex connected to a distinct interior vertex. More precisely,  the vertex set of $G_N$ is 
	\[V=\{v_1,\dots,v_N\}\cup\{u_1,\dots,u_N\},\]
	and the edge set of $G_N$ is
	\[
	E=\{(v_i,v_j),1\leq i<j\leq N\}\cup\{(u_k,v_k),1\leq k\leq N\}.
	\]
	However, unlike the combinatorial framework in  \cite{HW},  we will regard $G_N$ as a $1$-complex (whose edges are parameterized on the interval $[0,1]$) and in what follows we will equipped $G_N$  with a quantum graph structure.
	
	Let  $\gamma\in C(G_N)$ be a \textbf{metric element} on $G_N$, in other words, $\gamma$ is smooth and strictly positive when restricted  to each edge in $E$.  For simplicity we denote
	\[
	\gamma_{ij}(t)=\gamma|_{(v_i,v_j)}(t),\qquad \gamma_k(t)=\gamma|_{(u_k,v_k)}(t)
	\]
	and  
	\[
	l_{ij}= \int_0^1 \gamma_{ij}(t) \mathrm{d}t, \qquad l_k= \int_0^1 \gamma_k(t) \mathrm{d}t.
	\]
    Let $G^{\gamma}_N$ be the metric graph constructed by assigning length  $l_{ij}$ to each edge $(v_i,v_j)$ and the length $l_k$ to each edge $(u_k,v_k)$. As usual we let $L^2(G_N^{\gamma})$ be the space of $L^2$-integrable functions on $G_N^{\gamma}$ and  $H_0^1(G_N^{\gamma})$ be the space of functions  $f\in C(G_N^{\gamma})$   satisfying  conditions
    \begin{enumerate}
    	\item 
    	      For each boundary vertex $u_k$, $f(u_k)=0$;
    	\item 
    	      For each edge $(v_i,v_j)$ (parametrized as $[0, l_{ij}]$), \[f_{ij}:=f|_{(v_i,v_j)}\in H^1([0,l_{ij}]);\]
    	\item 
    	For each boundary edge $(u_k,v_k)$ (parametrized as $[0, l_{k}]$), \[f_k:=f|_{(u_k,v_k)}\in H^1([0,l_k]).\]
    \end{enumerate}
    The Dirichlet Laplacian $\Delta_\gamma$ on $G^\gamma_N$ is defined as the Friedrichs extension of the following quadratic form  on $H_0^1(G_N^{\gamma})$,
	\[
	q_\gamma(f)= \sum_{k=1}^N \int_0^{l_k} |f_k'(t)|^2 \mathrm{d}t+ \sum_{1\leq i<j\leq N} \int_0^{l_{ij}} |f_{ij}'(t)|^2 \mathrm{d}t.
	\]
	The domain of $\Delta_\gamma$ consists of functions $f\in H_0^1(G_N^{\gamma})$ such that 
	\begin{enumerate}
		\item The restriction of $f$ to each edge belongs to $H^2$.
		\item At each interior vertex $v_i$, the sum of the derivatives of $f$ along all incident edges equals zero.
	\end{enumerate}
	
	Let $\gamma^0$ be the simplest metric element, defined by $\gamma^0=1$ on each edge of $G_N$. The first $N$ eigenvalues and eigenfunctions of $\Delta_{\gamma^0}$ can be calculated explicitly:
	
	\begin{lemma}\label{spec of gamma0}
		If $N\ge 3$. Then 
		\begin{enumerate}
			\item The first eigenvalue $\Lambda_1$ of $\Delta_{\gamma^0}$ is  
			\[
			\Lambda_1= k^2_1,\qquad k_1= \arccos \frac {N-1} {N}.
			\]
			It is  a simple eigenvalue and the corresponding eigenfunction is 
			\begin{equation} \label{psi0}
				\psi_0(t)=
				\begin{cases}
					\sin k_1t  &\text{on } (u_k,v_k),\ \forall 1\leq k\leq N,\\
					\sqrt{\frac{2}{N}}\cos k_1(t-\frac 12) &\text{on } (v_i,v_j),\ \forall 1\leq i< j\leq N,
				\end{cases}
			\end{equation}
			\item The second eigenvalue $\Lambda_2$ of $\Delta_{\gamma^0}$ is 
			\[
			\Lambda_2=k^2_2,\qquad k_2= \arccos \frac {-1} {N}.
			\]
			It has  multiplicity $N-1$, and the corresponding eigenspace is spanned by  functions $(\psi_1,\cdots,\psi_N)$ (which has dimension $N-1$), where 
			\[
			\psi_i(t)=
			\begin{cases}
				\sin k_2t \qquad &\text{on } (u_i,v_i), \\
				-\frac{1}{N-1} \sin k_2t \qquad &\text{on } (u_k,v_k),\ \forall k\neq i, \\
				\frac {\sqrt{N^2-1}} {N} \cos k_2t- \frac {1} {N(N-1)} \sin k_2t \qquad &\text{on } (v_i,v_j),\ \forall i<j, \\
				\frac {\sqrt{N^2-1}} {N} \cos k_2(1-t)- \frac {1} {N(N-1)} \sin k_2(1-t) \quad &\text{on } (v_j,v_i),\ \forall j<i,\\				
				-\frac {\sqrt{2N+2}} {(N-1)\sqrt N} \cos k_2(t-\frac 12) \qquad &\text{on other } (v_j,v_k). 			
			\end{cases}
			\]
		\end{enumerate}  
	\end{lemma}
	
	\begin{proof}
		The proof is similar to the argument in \cite[Theorem II.1]{CV2}. Firstly, it is easy to check the trigonometric identities
		\[\sin k_1=\sqrt{\frac 2 N}\cos\frac{k_1}{2}, \qquad \sin k_2=\frac{\sqrt{N^2-1}}{N}\]
		and 
		\[-\frac 1{N-1}\sin k_2=-\frac{\sqrt{2N+2}}{(N-1)\sqrt N}\cos\frac{k_2}2=\frac{\sqrt{N^2-1}}{N}\cos k_2-\frac 1{N(N-1)}\sin k_2,\]
		which ensure the continuity of the functions $\{\psi_0,\psi_1,\cdots,\psi_N\}$.  
		It is also easy to check 
		\[\cos k_1-(N-1)\sqrt{\frac{2}{N}}\sin \frac{k_1}{2}=0\]
		and
		\[
		-\frac{N+1}{N(N-1)} \cos k_2 - \frac{\sqrt{N^2-1}}{N} \sin k_2+ \frac{(N-2)\sqrt{2N+2}}{(N-1)\sqrt{N}}\sin\frac{k_2}{2}=0,
		\]
		which guarantee that the functions $\{\psi_0, \psi_1,\cdots, \psi_N\}$ belong to the domain of $\Delta_{\gamma^0}$. 
		
		It is also direct to check that $\{\psi_0, \psi_1,\cdots, \psi_N\}$ are eigenfunctions of $\Delta_{\gamma^0}$ with eigenvalues $\Lambda_0$ and $\Lambda_1$.  Although the functions $\{\psi_1,\cdots,\psi_{N}\}$ are constrained by the relation 
		\[
		\sum_{j=1}^N \psi_j =0,
		\]
		the functions $\{\psi_1,\cdots,\psi_{N-1}\}$ are linearly independent. So  
		\[
		\dim [\mathrm{span} (\psi_1,\cdots,\psi_N)]= N-1.
		\]
		
		It remains to prove  $\lambda_{N+1} (\Delta_{\gamma^0})$ (the $(N+1)$-th eigenvalue of $\Delta_{\gamma^0}$) is greater than $\Lambda_2$. For this purpose, we bisect all edges $\{(v_i,v_j)\}_{1\leq i< j\leq N}$ at their midpoints, decomposing $G_{N}^{\gamma^0}$ into $N$ identical new metric graphs (which are star graphs). Let $\Gamma$ be the new graph containing the edge $(u_1,v_1)$. Let $\Delta_\Gamma$ be the Laplacian defined on $\Gamma$ with Dirichlet boundary condition at $u_1$ and with Neumann boundary condition at all other boundary points. Let $\mu_2$ be the second eigenvalue of the Laplacian $\Delta_\Gamma$. By the min-max principle,  
		\[
		\lambda_{N+1} (\Delta_{\gamma^0})\geq \mu_2.
		\]
		
		Finally we prove $\mu_2=\pi^2$. We parameterize the edge $(u_1,v_1)$ as $[0,1]$ and each bisected edge $(w_i,v_1)$ as $[0,1/2]$, where $w_i$ is the midpoint of $(v_1,v_i)$. Let $k^2$ ($k>0$) be an eigenvalue of $\Delta_\Gamma$, with corresponding eigenfunction $f$ of the form
		\[
		f(t)=
		\begin{cases}
			\sin kt  &\text{on } (u_1,v_1), \\
			C\cos kt &\text{on } (w_i,v_1),\ \forall i\neq 1.
		\end{cases}
		\]
		The constants $k$ and $C$ must satisfy 
		\begin{equation} \label{equ1}
			\begin{cases}
				\sin k= C \cos \frac{k}{2}, \\
				\cos k-(N-1) C \sin \frac{k}{2}=0.
			\end{cases}
		\end{equation}
		There are two classes of solutions.
		If $\cos \frac{k}{2} = 0$, then $k = (2m+1)\pi$ with $m \in \mathbb{Z}_{\geq 0}$, leading to eigenvalues $(2m+1)^2\pi^2$. If $\cos \frac{k}{2} \neq 0$, then $C = 2\sin\frac{k}{2}$. Substituting this into the second equation in \eqref{equ1} yields $N\cos k = N-1$, which gives eigenvalues $k^2$ with 
		\[
		k = \pm \arccos \frac{N-1}{N} + 2m\pi, \quad m \in \mathbb{Z}_{\geq 0}.
		\] 
		Since there is only one value $k^2<\pi$, we conclude $\mu_2 = \pi^2$ and thus
		$\lambda_{N+1} (\Delta_{\gamma^0}) > \Lambda_2$.
	\end{proof}

	Now consider the map 
	$I_\gamma: L^2(G_N^{\gamma^0})\to L^2(G_N^{\gamma})$ which maps a function  $f$ to the function $I_\gamma(f)$ defined by 
	\begin{equation}\label{isometry}
		\begin{aligned}
			& I_\gamma(f)_{ij}\big(\int_0^t \gamma_{ij}(s)\mathrm{d}s\big)= f_{ij}(t)\cdot \gamma_{ij}(t)^{-\frac 12}, \qquad  \forall t\in [0,1]\\
			&I_\gamma(f)_k \big(\int_0^t \gamma_k(s)\mathrm{d}s\big)= f_k(t)\cdot \gamma_k(t)^{-\frac 12}, \qquad \forall t\in [0,1].
		\end{aligned}
	\end{equation}
	It is easy to check that $I_\gamma$ is an isometry. As a result, the spectrum of $\Delta_\gamma$ is the same as the spectrum of the Friedrichs extension of the quadratic  form
	\begin{equation} \label{q_gamma}
		\bar q_\gamma (f)= \sum_{k=1}^N \!\int_0^1 \! |(f_k\gamma_k^{-\frac{1}{2}})'(t)|^2 \gamma_k(t)^{-1} \mathrm{d}t +\! \sum_{  1\le i<j\le N }
		\! \int_0^1 \! |(f_{ij}\gamma_{ij}^{-\frac{1}{2}})'(t)|^2 \gamma_{ij}(t)^{-1} \mathrm{d}t
	\end{equation}
	on $H^1_0(G_N^{\gamma^0})$. The advantage of $\bar q_\gamma$ is that its domain is independent of $\gamma$. We denote the operator associated with $\bar q_\gamma$ by $\bar \Delta_\gamma$. Note that by construction, $\bar q_{\gamma^0}$ is $q_{\gamma^0}$.

    Choose $\frac{N(N+1)}{2}$ positive bump functions in $C^\infty_0(0,1)$,
	\[
	\begin{aligned}
		\varphi_{ij} \quad (1\leq i<j\leq N)\qquad \text{and}  \qquad  \varphi_k \quad (\ 1\leq k\leq N),
	\end{aligned}
	\]
	satisfying
	\[
	\int_0^1 \varphi_{ij}(t) \mathrm{d}t= \int_0^1 \varphi_k(t) \mathrm{d}t= 1.
	\]
	For any perturbation parameters $x=(x_{ij}, x_k)_{i<j, 1\leq i,j,k \leq N}$ near the origin of $\mathbb{R}^{\frac{N(N+1)}{2}}$, let $\gamma^x$ be the perturbed metric element on $G_N$ defined by
	\[
	\gamma^x_{ij}= 1+x_{ij} \varphi_{ij},\qquad \gamma^x_k= 1+x_k \varphi_k.
	\]
	Under such small metric perturbation, the operator $\bar \Delta_{\gamma^x}$ has $N-1$ eigenvalues (counting multiplicity) near  $\Lambda_2$. Let $E_x$ be the $(N-1)$-dimensional subspace spanned by the eigenfunctions associated with these eigenvalues. Note that $E_0$ is the eigenspace of $\Lambda_2$. Let 
	\[U_x: E_0 \to E_x \]
	be  the isometry constructed in  \eqref{iso}, then we get a quadratic form on $E_0$, 
	\[
	Q_x= \bar q_{\gamma^x} \circ U_x,
	\]
	where $\bar q_{\gamma^x}$ defined in \eqref{q_gamma}. We denote by  $Q(E_0)$  the space of all quadratic forms on $E_0$. 
	
	\begin{proposition} \label{submer}
    	For any sufficiently small ball $B \subset \mathbb{R}^{\frac{N(N+1)}{2}}$ centered at the origin, the map
		\[
		F: B \to Q(E_0), \qquad x \mapsto Q_x
		\]
		is a submersion at the origin.
	\end{proposition}
	
	\begin{proof}
		Again, we follows the strategy of the proof of \cite[Theorem II.2]{CV2}. For any perturbation parameter $x$, we denote
		\[
		\dot Q_x= \frac{d}{ds}\bigg|_{s=0} Q_{sx}, \qquad \dot {\bar q}_x= \frac{d}{ds}\bigg|_{s=0} \bar q_{\gamma^{sx}},
		\]
		As in the proof of \cite[Lemma II.3]{CV2}, for any $\varphi,\psi\in E_0$, one has
		\begin{equation}\label{Qq0}
		\begin{aligned}
		 \dot Q_x(\varphi,\psi)=& \frac{d}{ds}\bigg|_{s=0}  \bar q_{\gamma^{sx}}(U_{sx}\varphi,U_{sx}\psi)\\
		= &\dot {\bar q}_x (\varphi,\psi)+ q_{\gamma^0}(\frac{d}{ds}\bigg|_{s=0}U_{sx}\varphi, \psi)+q_{\gamma^0}(\varphi,\frac{d}{ds}\bigg|_{s=0}U_{sx}\psi)\\
		= &\dot {\bar q}_x (\varphi,\psi).
	    \end{aligned}
		\end{equation}
		
		Now fix any $f \in E_0$. On each edge of the graph $G_N^{\gamma^0}$, $f$ is a linear combination of $\cos k_2t$ and $\sin k_2t$. So for any $1\le k\le N$,  
		\[
		\begin{aligned}
			&\frac{d}{ds}\bigg|_{s=0}\int_0^1 |(f_k(t)(1+sx_k\varphi_k(t))^{-\frac 12})'|^2 (1+sx_k\varphi_k(t))^{-1} \mathrm{d}t\\
			=&\int_0^1 2f_k'(t)\frac{d}{ds}\bigg|_{s=0}[((1+sx_k\varphi_k(t))^{-\frac 12})']\mathrm{d}t-\int_0^1 f_k'(t)^2 x_k \varphi_k(t) \mathrm{d}t\\
			=&\int_0^1 2f_k'(t) (-\frac 12 f_k(t)x_k\varphi_k(t))'\mathrm{d}t-\int_0^1 f_k'(t)^2 x_k \varphi_k(t) \mathrm{d}t\\
			=&\int_0^1 f_k''(t) f_k(t)x_k\varphi_k(t)-f_k'(t)^2 x_k \varphi_k(t) \mathrm{d}t\\
			=&-\int_0^1 (f_k'(t)^2+k^2_2 f_k(t)^2)x_k \varphi_k(t) \mathrm{d}t,
		\end{aligned}
		\] 
		and similarly, for any $1\le i<j\le N$,
		\[
		\begin{aligned}
			&\frac{d}{ds}\bigg|_{s=0}\int_0^1 |(f_{ij}(t)(1+sx_{ij}\varphi_{ij}(t))^{-\frac 12})'|^2 (1+sx_{ij}\varphi_{ij}(t))^{-1} \mathrm{d}t\\
			=&-\int_0^1(f'_{ij}(t)^2+ k^2_2 f_{ij}(t)^2) x_{ij} \varphi_{ij}(t) \mathrm{d}t.
		\end{aligned}
		\]
		Thus, by expression \eqref{q_gamma} and \eqref{Qq0}, one has	
		\begin{equation} \label{qx}
			\begin{aligned}
				\dot Q_x(f)=\dot {\bar q}_x (f)=& -\sum_{k=1}^N \int^1_0 (f_k'(t)^2+k^2_2 f_k(t)^2)x_k \varphi_k(t) \mathrm{d}t\\
				&- \sum_{1\leq i< j\leq N} \int_0^1 (f'_{ij}(t)^2+ k^2_2 f_{ij}(t)^2) x_{ij} \varphi_{ij}(t) \mathrm{d}t.
			\end{aligned}
		\end{equation}
		
		Let $\{X_k=(0, e_k), X_{ij}=(e_{ij},0)\}_{k,i<j}$  be a canonical basis of $\mathbb{R}^{\frac{N(N+1)}{2}}$, where $e_k$ is the vector in $\mathbb{R}^N$ which equals 1 only on the $k$-th component, and $e_{ij}$ is the vector in $\mathbb{R}^{\frac{N(N-1)}{2}}$ which equal 1 only on the $(i,j)$-th component. Considering the vectors
		\[
		F_k=\big( \dot {\bar q}_{X_k}(\psi_{l}, \psi_{m}) \big)_{1\leq l< m\leq N}, \qquad F_{ij}=\big( \dot {\bar q}_{X_{ij}}(\psi_{l}, \psi_{m}) \big)_{1\leq l< m\leq N}
		\]
		where $\psi_{l},\psi_{m}$ are the eigenfunctions defined in Lemma \ref{spec of gamma0}. To prove Proposition \ref{submer}, it suffices to verify that
		\begin{equation} \label{rank}
			\rank\{ F_k, F_{ij} \}_{k,i<j}=\frac{N(N-1)}{2}.
		\end{equation}
		By expression of $\psi_i$ in Lemma \ref{spec of gamma0} and (\ref{qx}), one has
		\begin{itemize}
			\item $\dot {\bar q}_{X_k}(\psi_l,\psi_m)= -\frac{k_2^2}{(N-1)^2}$,\qquad if $l,m\neq k$
			\item $\dot {\bar q}_{X_k}(\psi_l,\psi_k)=\dot {\bar q}_{X_k}(\psi_k,\psi_l)=\frac{k_2^2}{N-1}$,\qquad if $l\neq k$
			\item $\dot {\bar q}_{X_{ij}}(\psi_i,\psi_j)= k_2^2\frac{N^2-2}{N(N-1)^2}$
			\item $\dot {\bar q}_{X_{ij}}(\psi_l,\psi_m)= -k_2^2 \frac{2(N+1)}{N(N-1)^2}$,\qquad if $\{l,m\}\cap \{i,j\}=\emptyset$
			\item $\dot {\bar q}_{X_{ij}}(\psi_i,\psi_m)=\dot {\bar q}_{X_{ij}}(\psi_l,\psi_j)=k_2^2 \frac{N^2-N-2}{N(N-1)^2}$,\qquad if $l\neq i, m \neq j$.
		\end{itemize}
		Let $e$ be the vector in $\mathbb{R}^{\frac{N(N-1)}{2}}$ which equals 1 on each component. Then (\ref{rank}) is a consequence of  the following facts:
		\begin{itemize}
			\item $\sum\limits_{k=1}^N F_k= \frac{k^2_2 N}{(N-1)^2} \cdot e$,
			\item $F_{ij}+ k_2^2\frac{2(N+1)}{N(N-1)^2}\cdot e-\frac{N+1} N(F_i+F_j+\frac{2k^2_2}{(N-1)^2}\cdot e)=-k^2_2\frac N{(N-1)^2}\cdot e_{ij}$.
		\end{itemize}
	\end{proof}
	
	Next, we present a proposition concerning the stability of $\lambda_1(\bar \Delta_{\gamma^x})$, which will be utilized in the proof of Theorem \ref{prediffeigen}.
	
	\begin{proposition}\label{simple stable}
		Let $B \subset \mathbb{R}^{\frac{N(N+1)}{2}}$ be a ball centered at the origin with sufficiently small radius. Then the map
		\[
		F_1: B\to \mathbb R_{>0},\qquad x\mapsto \lambda_1(\bar \Delta_{\gamma^x})
		\]
		is a submersion at the origin.
	\end{proposition}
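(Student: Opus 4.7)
The plan is to adapt the proof of Proposition \ref{submer} to the simple eigenvalue $\Lambda_1$ instead of the $(N-1)$-dimensional eigenspace of $\Lambda_2$. Since the target $\mathbb{R}_{>0}$ is one-dimensional, $F_1$ is a submersion at the origin if and only if $dF_1|_0 \neq 0$, i.e., if there exists some direction $X \in \mathbb{R}^{N(N+1)/2}$ with $\frac{d}{ds}\big|_{s=0}\lambda_1(\bar{\Delta}_{\gamma^{sX}}) \neq 0$. Simplicity of $\Lambda_1$ (Lemma \ref{spec of gamma0}) makes this much easier than in Proposition \ref{submer}, since no basis-fixing among eigenfunctions is required.

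Because $\Lambda_1 = k_1^2$ is simple and the quadratic form $\bar{q}_{\gamma^x}$ depends smoothly on $x$ on the fixed domain $H^1_0(G_N^{\gamma^0})$, standard analytic perturbation theory gives
\[
\frac{d}{ds}\bigg|_{s=0}\lambda_1(\bar{\Delta}_{\gamma^{sx}}) = \frac{\dot{\bar{q}}_x(\psi_0,\psi_0)}{\|\psi_0\|^2_{L^2(G_N^{\gamma^0})}},
\]
where $\psi_0$ is the eigenfunction given in \eqref{psi0}. The algebraic derivation leading to \eqref{qx} uses only the eigenvalue equation on each edge and therefore applies verbatim, with $k_2$ replaced by $k_1$, to yield
\[
\dot{\bar{q}}_x(\psi_0) = -\sum_{k=1}^N x_k \int_0^1 (\psi_{0,k}'^2 + k_1^2 \psi_{0,k}^2)\varphi_k\, dt - \sum_{1\le i<j\le N} x_{ij}\int_0^1 (\psi_{0,ij}'^2 + k_1^2 \psi_{0,ij}^2)\varphi_{ij}\, dt.
\]

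Substituting the explicit form of $\psi_0$ from \eqref{psi0}, one checks directly that $\psi_{0,k}'^2 + k_1^2 \psi_{0,k}^2 \equiv k_1^2$ on each edge $(u_k,v_k)$ and $\psi_{0,ij}'^2 + k_1^2 \psi_{0,ij}^2 \equiv 2k_1^2/N$ on each edge $(v_i,v_j)$. Combined with the normalization $\int_0^1 \varphi_k = \int_0^1 \varphi_{ij} = 1$, the expression collapses to
\[
\dot{\bar{q}}_x(\psi_0) = -k_1^2 \sum_{k=1}^N x_k - \frac{2k_1^2}{N}\sum_{1\le i<j\le N} x_{ij},
\]
which is manifestly a non-zero linear functional of $x$; taking $x = X_1$ already gives the value $-k_1^2 \neq 0$.

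There is no real obstacle here: the only point that needs care is the smooth dependence of the simple eigenvalue on $x$, which is immediate from Kato-type perturbation theory since the domain $H^1_0(G_N^{\gamma^0})$ is fixed and $\bar{q}_{\gamma^x}$ is rational (in fact polynomial on bounded sets) in $x$. Everything else is a routine computation built on the machinery already established for Proposition \ref{submer}.
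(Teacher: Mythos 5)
Your proposal is correct and follows essentially the same route as the paper: both use first-order perturbation theory for the simple eigenvalue $\Lambda_1$ (justified by the fixed form domain and fixed $L^2$ inner product) to reduce the question to $\dot{\bar q}_x(\psi_0)\neq 0$, and both evaluate this via the computation underlying \eqref{qx} with $k_2$ replaced by $k_1$; the paper only checks the single direction $x=(0,e_k)$, obtaining $-(\arccos\frac{N-1}{N})^2=-k_1^2$, which agrees with your more explicit formula for the full differential.
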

	
	\begin{proof}
		For a point $x$ and $s>0$, let $\psi^{sx}_0$ be the normalized first eigenfunction of $\bar\Delta_{\gamma^{sx}}$. Since the quadratic form $\bar q_{\gamma^{sx}}$ is differentiable with respect to $s$, one can multiply $\psi^{sx}_0$ by $\pm 1$ to ensure that it is differentiable with respect to $s$ and the following holds:
		\[
		\lim_{s\to 0^+}\psi_0^{sx}=\frac{\psi_0}{|\psi_0|_{L^2(G_N^{\gamma^0})}},
		\]
		where $\psi_0$ is defined in \eqref{psi0}. Define
		\[
		\psi_0^0=\frac{\psi_0}{|\psi_0|_{L^2(G_N^{\gamma^0})}},\qquad \alpha_x(s)=\langle \psi^{sx}_0,\psi^0_0\rangle_{L^2(G_N^{\gamma^0})}.
		\]
		Hereafter, we omit the subscript $L^2(G_N^{\gamma^0})$ for simplicity. Since $|\psi^{sx}_0|=1$, we have
		\[
		\begin{aligned}
		0&=\frac d{ds}\bigg|_{s=0} \langle\psi^{sx}_0,\psi^{sx}_0 \rangle\\&=\frac d{ds}\bigg|_{s=0} \big(\alpha_x(s)^2+\langle \psi^{sx}_0-\alpha_x(s)\psi^0_0, \psi^{sx}_0-\alpha_x(s)\psi^0_0\rangle\big)\\
		&=2\frac d{ds}\bigg|_{s=0} \alpha_x(s).
		\end{aligned}
		\]
		Thus,
		\[
		\begin{aligned}
		\frac d{ds}\bigg|_{s=0} \lambda_1(\bar \Delta_{\gamma^{sx}})=& \frac d{ds}\bigg|_{s=0} \bar q_{\gamma^{sx}}(\psi^{sx}_0)\\=&\frac d{ds}\bigg|_{s=0}\big( \bar q_{\gamma^{sx}}(\psi^0_0)+2q_{\gamma^0}(\psi^{sx}_0,\psi^0_0) \big)\\
		=&\frac d{ds}\bigg|_{s=0} \frac{\bar q_{\gamma^{sx}}(\psi_0)}{|\psi_0|^2_{L^2(G_N^{\gamma^0})}}+2 \frac d{ds}\bigg|_{s=0} [\alpha_x(s) q_{\gamma^0}(\psi_0^0)]\\
		=&\frac d{ds}\bigg|_{s=0} \frac{\bar q_{\gamma^{sx}}(\psi_0)}{|\psi_0|^2_{L^2(G_N^{\gamma^0})}}.
		\end{aligned}
		\] 
		By \eqref{psi0}, if we take $x=(0,e_k)$ for some $1\le k\le N$, then 
		\[
		\frac d{ds}\bigg|_{s=0}\bar q_{\gamma^{sx}}(\psi_0)=-(\arccos\frac{N-1}{N})^2\neq 0.
		\]
		Thus $F_1$ is a submersion at the origin.
	\end{proof}
	
	\section{Proof of Theorem \ref{prevalueRobin} and Theorem \ref{2dimRobin}} \label{proof1}
	
	In this section, we will employ concepts from spectral convergence theory and stable metrics. For precise definitions, we refer the reader to Appendix \ref{app}.
	
	We will reformulate the problem of prescribing Robin eigenvalues as an equivalent problem of prescribing Dirichlet eigenvalues. The first key step involves reducing Theorem \ref{prevalueRobin} to the case where the Robin parameter function $\rho$     has a sufficiently large lower bound. This reduction is established in the following proposition, which  is inspired by \cite[Theorem 3.6]{PJSte}.

	\begin{proposition} \label{DtoR}
		Let $(M,g)$ be a compact Riemannian manifold of dimension $n \geq 2$, with smooth boundary $\partial M$, and let $\bar \rho, \rho \in C_+(\partial M)$ be two functions satisfying $\bar \rho > \rho$. Then there exists a family of  Riemannian metrics $\{g_\varepsilon\}_{\varepsilon>0}$  in the conformal class of $g$ on $M$ such that
		\begin{itemize}
			\item[(1)] $\lim_{\varepsilon \to 0^+} \lambda_k(M, g_\varepsilon,\rho) =\lambda_k(M, g,\bar \rho)$ for all $k$.
			\item[(2)] $g_\varepsilon= (\bar \rho/\rho)^{2/(n-1)} g$ on $\partial M$.
			\item[(3)] $g_\varepsilon \to g$  (in $C^0$ topology)  uniformly on any compact subset in $M \setminus \partial M$.
		\end{itemize}
		Moreover, if the first $N+1$ eigenvalues $\{\lambda_k(M, g,\bar\rho)\ |\ 1\le k \le N+1\}$ satisfies the hypothesis (\ref{hoe}) in appendix A, then the $N$-spectral difference between the Robin Laplacian $\Delta(M,g_\varepsilon, \rho)$ and the Robin Laplacian $\Delta(M,g, \bar \rho)$ converges to 0 as $\varepsilon \to 0^+$.
	\end{proposition}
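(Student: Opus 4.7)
The plan is to realize $g_\varepsilon = e^{2\phi_\varepsilon} g$ as a conformal deformation concentrated in a shrinking collar of $\partial M$. The transformation formulas
\begin{align*}
q_\rho^{g_\varepsilon}(f) &= \int_M e^{(n-2)\phi_\varepsilon}|\nabla_g f|^2\,\mathrm{d}V_g + \int_{\partial M}\rho\, e^{(n-1)\phi_\varepsilon|_{\partial M}} f^2\,\mathrm{d}\sigma_g,\\
\|f\|_{L^2(g_\varepsilon)}^2 &= \int_M e^{n\phi_\varepsilon} f^2\,\mathrm{d}V_g
\end{align*}
show that taking $\phi_\varepsilon|_{\partial M} = \tfrac{1}{n-1}\log(\bar\rho/\rho)$ makes the $g_\varepsilon$-boundary contribution coincide exactly with the $\bar\rho$-boundary term of $q_{\bar\rho}^g$. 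Concretely, I would fix a collar $\partial M\times[0,\delta_0)\hookrightarrow M$ via the normal exponential map with distance coordinate $t$, pick a smooth cutoff $\chi:[0,\infty)\to[0,1]$ with $\chi(0)=1$ and $\chi\equiv 0$ on $[1,\infty)$, and set $\phi_\varepsilon(y,t) = \chi(t/\varepsilon)\cdot\tfrac{1}{n-1}\log(\bar\rho(y)/\rho(y))$ inside the collar, extended by zero elsewhere. Properties (2) and (3) follow immediately, and $\phi_\varepsilon$ is uniformly bounded and supported in $\{t\le\varepsilon\}$.

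For (1) I would invoke the min--max principle twice. For the upper bound, test the $g_\varepsilon$-Rayleigh quotient against the span of the first $k$ $\bar\rho$-Robin eigenfunctions of $(M,g)$; bounded convergence on the uniformly bounded conformal factor drives the interior integrals to their $\varepsilon=0$ values while the boundary term is exact, giving $\limsup_\varepsilon \lambda_k(M,g_\varepsilon,\rho)\le\lambda_k(M,g,\bar\rho)$. For the matching lower bound, test the $(M,g,\bar\rho)$-Rayleigh quotient against the first $k$ $\rho$-Robin eigenfunctions $f^\varepsilon_1,\dots,f^\varepsilon_k$ of $(M,g_\varepsilon)$; since $\bar\rho>\rho$ and $n\ge 2$ force $\phi_\varepsilon\ge 0$, the two defects
\begin{align*}
q_\rho^{g_\varepsilon}(f)-q_{\bar\rho}^g(f) &= \int_{\{t\le\varepsilon\}}(e^{(n-2)\phi_\varepsilon}-1)|\nabla_g f|^2\,\mathrm{d}V_g,\\
\|f\|_{L^2(g_\varepsilon)}^2-\|f\|_{L^2(g)}^2 &= \int_{\{t\le\varepsilon\}}(e^{n\phi_\varepsilon}-1)f^2\,\mathrm{d}V_g
\end{align*}
are nonnegative and supported in the shrinking collar; showing both are $o(1)$ on Rayleigh-bounded families yields $\liminf_\varepsilon\lambda_k(M,g_\varepsilon,\rho)\ge\lambda_k(M,g,\bar\rho)$.

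The ``moreover'' clause upgrades pointwise eigenvalue convergence to convergence in the $N$-spectral difference of Appendix \ref{app}. I would use the natural identification $L^2(M,g)\to L^2(M,g_\varepsilon)$, $f\mapsto e^{-n\phi_\varepsilon/2}f$, together with the quadratic form convergence already established, and feed the data into the abstract spectral convergence theorem of the appendix; hypothesis (\ref{hoe}) blocks the eigenvalue crossings that could otherwise obstruct the difference estimate. The principal obstacle lies in the collar estimate driving the lower bound: while the $L^2$-mass $\int_{\{t\le\varepsilon\}}(f^\varepsilon_k)^2\,\mathrm{d}V_g$ is readily controlled via a one-dimensional trace inequality in the normal direction together with the a priori bound $\int_{\partial M}(f^\varepsilon_k)^2\,\mathrm{d}\sigma_g \le \lambda_k/\min_{\partial M}\bar\rho$, the collar Dirichlet energy $\int_{\{t\le\varepsilon\}}|\nabla_g f^\varepsilon_k|^2\,\mathrm{d}V_g$ is not a priori small, so one must establish equi-integrability of the eigenfunction gradients, say via uniform boundary regularity estimates across the family $\{g_\varepsilon\}$.
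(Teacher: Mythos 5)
Your conformal deformation is exactly the paper's: writing $h_\varepsilon=e^{\phi_\varepsilon}$, the paper takes $g_\varepsilon=h_\varepsilon^2 g$ with $h_\varepsilon=(\bar\rho/\rho)^{1/(n-1)}$ on $\partial M$, $h_\varepsilon\ge 1$ everywhere, and $h_\varepsilon\equiv 1$ outside a shrinking collar, so that the boundary term of $q_\rho^{g_\varepsilon}$ becomes exactly $\int_{\partial M}\bar\rho f^2\,\mathrm d\sigma_g$; your transformation formulas and properties (2)--(3) are fine. Where you diverge is in closing (1), and there your self-declared ``principal obstacle'' does not exist. Since $\bar\rho>\rho$ forces $\phi_\varepsilon\ge 0$ and $n\ge 2$, you have the pointwise monotonicity $q_\rho^{g_\varepsilon}(f)\ge q_{\bar\rho}^{g}(f)$ for \emph{every} $f\in H^1(M)$: the gradient defect enters the lower bound with the favourable sign and never needs to be shown small. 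Indeed, taking $E$ to be the span of the first $k$ eigenfunctions of $(M,g_\varepsilon,\rho)$, min--max gives $\lambda_k(M,g,\bar\rho)\le\max_{f\in E} q_{\bar\rho}^g(f)/\|f\|_{g}^2\le \lambda_k(M,g_\varepsilon,\rho)\cdot\max_{f\in E}\|f\|_{g_\varepsilon}^2/\|f\|_{g}^2$, so the only quantity to control is the $L^2$-mass defect in the collar, which you already handle via the one-dimensional trace estimate together with $\int_{\partial M}f^2\,\mathrm d\sigma_g\le\lambda_k/\min_{\partial M}\bar\rho$ and $\int_M|\nabla_g f|^2\,\mathrm dV_g\le q_\rho^{g_\varepsilon}(f)\le\lambda_k$. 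No equi-integrability of eigenfunction gradients and no uniform boundary regularity across $\{g_\varepsilon\}$ is required; once you delete that detour, your argument is complete.

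The paper packages this same monotonicity more efficiently: since the form domain $H^1(M)$ is fixed, it simply verifies the hypotheses of Theorem \ref{ct2} --- namely $Q\le Q_\varepsilon$, pointwise convergence $Q_\varepsilon(f)\to Q(f)$ and $|f|_\varepsilon\to|f|$ (dominated convergence, as $h_\varepsilon$ is uniformly bounded and tends to $1$ off the boundary), and the uniform norm equivalence $C_1|f|\le|f|_\varepsilon\le C_2|f|$ --- and obtains the eigenvalue convergence and the $N$-spectral-difference claim under (\ref{hoe}) in a single application. That is also the cleanest way to finish your ``moreover'' step, which you correctly propose to route through the appendix; note you do not even need the identification $f\mapsto e^{-n\phi_\varepsilon/2}f$, because Theorem \ref{ct2} is formulated for a fixed domain carrying varying inner products and quadratic forms.
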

	
	\begin{proof}
		Following the strategy of the proof of \cite[Theorem 3.6]{PJSte}, we first choose a family of smooth functions $h_\varepsilon\in C^\infty(M\setminus \partial M)$ such that
		\begin{itemize}
			\item[(1)] $h_\varepsilon= (\bar \rho/\rho)^{1/(n-1)}$ on $\partial M$, $h_\varepsilon \geq 1$ in $M$,
			\item[(2)] As $\varepsilon \to 0^+$, $h_\varepsilon\to  1$ uniformly on any compact subset in $M \setminus \partial M$.
		\end{itemize}
		For example, one may start with the harmonic extension $\hat\rho$ of $(\bar \rho/\rho)^{1/(n-1)}$ on $M$, then let $h_\varepsilon=\hat \rho$ in the $\varepsilon$-neighborhood of $\partial M$ and  $h_\varepsilon=1$ away from $2\varepsilon$-neighborhood of $\partial M$.
		Consider the conformal metrics
		\[
		g_\varepsilon = h_\varepsilon^2 \cdot g.
		\]
		Then the quadratic form associated to $\Delta(M, g_\varepsilon,\rho)$ is 
		\[
		Q_\varepsilon (f)= \int_M |\nabla_g f|^2 h_\varepsilon^{n-2} \mathrm{d} V_g+ \int_{\partial M} \bar \rho \cdot f^2 \mathrm{d}\sigma_g,
		\]
		while the $L^2$-norm associated to $g_\varepsilon$ is 
		\[
		|f|_\varepsilon= (\int_M f^2 h_\varepsilon^n \mathrm{d}V_g)^{\frac{1}{2}}.
		\]
		Let
		\[
		Q(f)=\int_M |\nabla_g f|^2 \mathrm{d} V_g+\int_{\partial M} \bar\rho\cdot f^2\mathrm{d}\sigma_g,\qquad |f|=(\int_M f^2 \mathrm{d}V_g)^{\frac{1}{2}}.
		\]
		Then 
		there exists   $C_1>0,C_2>0$ such that
		\[
		C_1 |f|\leq |f|_\varepsilon \leq C_2|f|,\qquad \forall f\in L^2(M).
		\]
		Applying Theorem \ref{ct2} to $Q_\varepsilon,Q,|\cdot|_\varepsilon$  and $|\cdot|$, one gets Theorem \ref{DtoR}. 
	\end{proof}
	
	Proposition \ref{DtoR} establishes that finite eigenvalues of the Robin Laplacian on $M$ with Robin function $\rho$ can be prescribed, provided there exists a certain stable family that can be used to prescribe finite eigenvalues of the Robin Laplacian on $M$ with any boundary function $\bar\rho$ satisfying  $\bar\rho > \rho$.
	
	In the proof of \cite[Theorem 1.2]{HW}, we constructed a family of stable metrics  
	\begin{equation} \label{mathcalF}
		\mathcal F: B \to \mathcal M(M)
	\end{equation}  
	for the Dirichlet Laplacian on $M$ with prescribed volume $V$ and prescribed first $N$ eigenvalues. A natural question arises: is $\mathcal F(B)$ also a stable family for the Robin Laplacian on $M$? To investigate this, we will compare the eigenspaces of the Robin Laplacian $\Delta(M, g,\rho)$ and of the Dirichlet Laplacian $\Delta^\mathcal{D}(M,g)$, in the case the boundary function $\rho$ is sufficiently  large.
	For simplicity, we denote the eigenvalues of $\Delta^\mathcal{D}(M,g)$ by
	\begin{equation} \label{Dirieigen}
		\lambda_k^\mathcal{D} (M,g), \qquad k\ge 1.
	\end{equation}

	\begin{proposition} \label{Diri-Robin}
		Let $(M,g)$ be a compact Riemannian manifold with smooth boundary $\partial M$, and $\rho \in C_+(\partial M)$ be a positive Robin parameter. Suppose the first $N+1$ Dirichlet eigenvalues $\{\lambda^\mathcal{D}_k (M,g)\ | \ 1 \le k \le N+1\}$ satisfy the hypothesis (\ref{hoe}). Then for any $\alpha > 0$, there exists $C(\alpha)>0$ such that whenever $\rho \ge C(\alpha)$ on $\partial M$, the $N$-spectral difference between $\Delta (M, g, \rho)$ and $\Delta^\mathcal{D} (M,g)$ is less than $\alpha$.
	\end{proposition}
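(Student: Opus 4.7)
The plan is to view the Robin quadratic form $q_\rho$ as a large-boundary-penalty regularization of the Dirichlet form, and to deduce $N$-spectral convergence by invoking the abstract spectral-convergence machinery (Theorem \ref{ct2}) already used for Proposition \ref{DtoR}. Since $g$ is fixed and the $L^2(M,g)$ norm is independent of $\rho$, the core task is to show that $q_\rho$ converges to $q^{\mathcal D}$ (extended by $+\infty$ outside $H^1_0(M)$) in the appropriate sense as $\min_{\partial M}\rho\to\infty$, and then to translate this via the appendix into the quantitative $N$-spectral statement.

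First, I would establish convergence of the individual eigenvalues. The inclusion $H^1_0(M)\subset H^1(M)$ and the identity $q_\rho|_{H^1_0(M)}=q^{\mathcal D}$, combined with the min-max principle, give $\lambda_k(M,g,\rho)\le\lambda_k^{\mathcal D}(M,g)$ with $\lambda_k(M,g,\rho)$ non-decreasing as $\rho$ grows pointwise. For the reverse inequality, I would take a normalized $k$-th Robin eigenfunction $f_\rho$, exploit $q_\rho(f_\rho)=\lambda_k(M,g,\rho)\le\lambda_k^{\mathcal D}$ to obtain both a uniform $H^1(M)$-bound and the estimate $\|f_\rho\|_{L^2(\partial M)}^2\le\lambda_k^{\mathcal D}/\min_{\partial M}\rho$, extract a weakly $H^1$-convergent, strongly $L^2(M)$-convergent subsequence, and use continuity of the trace operator to conclude that the limit lies in $H^1_0(M)$. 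Passing to the limit in the weak formulation identifies the limit as a Dirichlet eigenfunction, and an orthonormality/induction argument for $k\le N+1$ forces $\lim_{\min\rho\to\infty}\lambda_k(M,g,\rho)=\lambda_k^{\mathcal D}(M,g)$.

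Second, I would upgrade pointwise convergence to the quantitative $N$-spectral difference by applying Theorem \ref{ct2}. Hypothesis \textup{(\ref{hoe})} furnishes the spectral gap separating $\lambda_N^{\mathcal D}$ from $\lambda_{N+1}^{\mathcal D}$, and together with the eigenvalue convergence above this allows me to set $Q_\rho=q_\rho$, $Q=q^{\mathcal D}$ (extended by $+\infty$ off $H^1_0(M)$) and $|\cdot|_\rho=|\cdot|=\|\cdot\|_{L^2(M,g)}$ in the hypothesis of the theorem. Its conclusion yields $N$-spectral difference tending to $0$ as $\min_{\partial M}\rho\to\infty$, so the contrapositive produces the desired threshold $C(\alpha)$.

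The main obstacle lies in verifying the quadratic-form convergence hypotheses of Theorem \ref{ct2} when the limiting form has a strictly smaller domain, $H^1_0(M)\subsetneq H^1(M)$. The $\liminf$-type inequality for the forms is automatic from the nonnegativity of the boundary term. The delicate point is the complementary recovery-type statement: for each Dirichlet eigenfunction one must produce approximants from $H^1(M)$ whose boundary integrals decay fast enough against the growing penalty $\rho$. This is precisely where the trace inequality and the a priori bound $\int_{\partial M}f^2\le q_\rho(f)/\min_{\partial M}\rho$ are used, and it is the step that requires the most care to fit cleanly into the abstract framework of the appendix.
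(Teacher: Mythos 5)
Your first step (monotone convergence of the Robin eigenvalues to the Dirichlet ones via the trace bound $\int_{\partial M}f_\rho^2\le \lambda_k^{\mathcal D}/\min\rho$ and a weak-compactness argument) is correct, but it only re-derives the eigenvalue convergence that the paper simply quotes from Bucur--Freitas--Kennedy; it is not where the difficulty lies. The genuine gap is in your second step: Theorem \ref{ct2} cannot be invoked in this situation. That theorem requires a \emph{fixed} domain $D(Q)$ on which all the forms are finite and on which $Q(x)\le Q_n(x)$ holds. Here the natural common domain is $H^1_0(M)$, but on $H^1_0(M)$ the Robin form coincides identically with the Dirichlet form (the boundary term vanishes), so the associated operators built from the restricted forms are all $\Delta^{\mathcal D}(M,g)$ and the conclusion of Theorem \ref{ct2} says nothing about the Robin spectrum; to see the Robin eigenfunctions you must work on $H^1(M)$, where the limiting Dirichlet object (your ``$+\infty$ extension'') is not a finite quadratic form and the monotonicity goes the wrong way ($q_\rho$ increases \emph{up to} the Dirichlet form, whereas Theorem \ref{ct2} needs the approximating forms to dominate the limit on its domain). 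Your closing paragraph acknowledges this mismatch but does not resolve it, and the trace estimate you invoke there controls boundary values of Robin test functions; it does not by itself yield closeness of the $N$-eigenspaces in $L^2$, which is the actual content of an $N$-spectral difference bound.

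The paper closes exactly this gap by a different mechanism: it applies the criterion of Colin de Verdi\`ere (Lemma \ref{crit}) with $A_0=A_1=I$ (both eigenspaces sit in the same $L^2(M,g)$), so that, given the eigenvalue convergence, the only thing left is to show $\|B\|\to 0$, i.e.\ that the Robin $N$-eigenspace converges to the Dirichlet one. This is proved by hand: since each Dirichlet eigenfunction $\varphi_k^{\mathcal D}$ vanishes on $\partial M$, it is an admissible test function for the Robin form with $Q_\rho(\varphi_k^{\mathcal D})=Q_{\mathcal D}(\varphi_k^{\mathcal D})=\lambda_k^{\mathcal D}$; expanding $\varphi_k^{\mathcal D}$ against the Robin eigenfunctions and using the ordering $\lambda_1(\rho)<\lambda_2(\rho)\le\cdots$ together with the gap $\delta$ from hypothesis (\ref{hoe}) gives, cluster by cluster, a lower bound showing that the $L^2$-projection of $\varphi_k^{\mathcal D}$ onto $\Span\{\varphi_1^\rho,\dots,\varphi_N^\rho\}$ has norm tending to $1$ as $\min\rho\to\infty$. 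If you want to salvage an appendix-based route, the appropriate tool would be Theorem \ref{ct1} applied to the $q_\rho$-orthogonal splitting $H^1(M)=H^1_0(M)\oplus\mathcal H_\rho$ (with $\mathcal H_\rho$ the harmonic complement), but then you would still owe a proof that $q_\rho(u)/\|u\|^2_{L^2(M)}\to\infty$ uniformly on $\mathcal H_\rho$, which is a nontrivial estimate not contained in your sketch.
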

	
	\begin{proof}
		Suppose $\rho \ge C$ on $\partial M$. By Proposition 4.5 in \cite{BFK},  for any $k$,
		\begin{equation}\label{conRobDiri}
		\lambda^\mathcal{D}_k (M,g) - \lambda_k(M, g,\rho)\to 0,\qquad\text{as }C \to +\infty.
		\end{equation}
		So if $C$ is sufficiently large, the $N$-eigenspace of $\Delta(M, g,\rho)$ is well-defined (for the definition of the $N$-eigenspace, see Appendix \ref{app}). In what follows we denote the $N$-eigenspace of $\Delta(M, g,\rho)$ by $E_0$, and denote the $N$-eigenspace of $\Delta^\mathcal{D} (M,g)$   by $E_1$.
		
		According to Lemma \ref{crit} (applied to $A_0=A_1=I$), \textbf{it suffices to prove that $\|B\| \to 0$ as $C \to +\infty$} (see Appendix A for the definition of the operator $B$). For simplicity, we assume
		\[
		a=\lambda^\mathcal D_1(M,g)<\lambda^\mathcal D_2(M,g)=\cdots=\lambda^\mathcal D_N(M,g)=b<b+\delta\le \lambda^\mathcal D_{N+1}(M,g)
		\]
		where $\delta>0$ is the constant in \eqref{hoe}.  The general case (where eigenvalues may have different multiplicities or clustering patterns) can be handled by applying the same argument iteratively to each relevant spectral cluster. We omit these routine technical details for brevity.
		
		Let $\{\varphi^\mathcal D_k\}_{k\ge1}$ be an orthonormal set of eigenfunctions corresponding to $\{\lambda^\mathcal D_k(M,g)\}_{k\ge 1}$, and let $\{\varphi^\rho_k\}_{k\ge 1}$ be an orthonormal set of eigenfunctions corresponding to $\{\lambda_k(M, g,\rho)\}_{k\ge 1}$. To prove that $\|B\| \to 0$ as $C \to +\infty$, \textbf{it suffices to show that $P^\rho_N \varphi^\mathcal D_k \to\varphi^\mathcal D_k$  for each $1\le k\le N$}, where $P^\rho_N$ is the  orthogonal projection onto  the subspace
		\[\Span\{\varphi^\rho_1,\cdots,\varphi^\rho_N\}.\]
		
		Let $Q_\mathcal D$ and $Q_\rho$ denote the quadratic forms associated with $\Delta^\mathcal{D} (M,g)$ and $\Delta (M, g, \rho)$, respectively. Then,
		\[
		\begin{aligned}
		a=&Q_\mathcal D(\varphi^\mathcal D_1)=Q_\rho (\varphi^\mathcal D_1)=Q_\rho(\langle \varphi^\mathcal D_1,\varphi^\rho_1 \rangle_{L^2}\cdot \varphi^\rho_1)+Q_\rho(\varphi^\mathcal D_1-\langle \varphi^\mathcal D_1,\varphi^\rho_1 \rangle_{L^2}\cdot \varphi^\rho_1)\\
		\ge&\langle \varphi^\mathcal D_1,\varphi^\rho_1 \rangle_{L^2}^2\cdot \lambda_1(M,g,\rho)+\lambda_2(M,g,\rho)(1-\langle \varphi^\mathcal D_1,\varphi^\rho_1 \rangle_{L^2}^2),
		\end{aligned}
		\]
		which implies 
		\[
		1\ge \langle \varphi^\mathcal D_1,\varphi^\rho_1 \rangle_{L^2}^2\ge \frac{\lambda_2(M,g,\rho)-a}{\lambda_2(M,g,\rho)-\lambda_1(M,g,\rho)}.
		\]
		Combining this with \eqref{conRobDiri}, one has 
		\begin{equation}\label{lambda1}
		\langle \varphi^\mathcal D_1,\varphi^\rho_1 \rangle_{L^2}^2\to 1\qquad \text{as }C\to +\infty,
		\end{equation}
		which in turn implies
		\begin{equation}\label{phiD1}
		|\varphi_1^\mathcal D-P^\rho_N \varphi_1^\mathcal D|^2_{L^2}=|\varphi_1^\mathcal D|^2_{L^2}-|P^\rho_N \varphi_1^\mathcal D|^2_{L^2}\le|\varphi_1^\mathcal D|_{L^2}^2-\langle \varphi^\mathcal D_1,\varphi^\rho_1 \rangle_{L^2}^2\to0,\qquad \text{as }C\to \infty.
		\end{equation}
		
		Next, define the subspace:
		\[
		E^*=\Span\{\varphi^\rho_2,\cdots,\varphi^\rho_N\}.
		\] 
		Let
		\[
		\widetilde{\varphi^\mathcal D_k},\qquad 2\le k\le N
		\]
		denote the projection of $\varphi^\mathcal D_k$ onto $E^*$. Then, for all $2\le k\le N$,
		\begin{equation*} 
		\begin{aligned}
		b=Q_\mathcal D(\varphi^\mathcal{D}_k)=&Q_\rho(\varphi^\mathcal{D}_k)\\
		=&Q_\rho(\langle\varphi^\mathcal{D}_k,\varphi^\rho_1\rangle_{L^2}\cdot \varphi^\rho_1)+Q_\rho(\widetilde{\varphi^\mathcal D_k})+Q_\rho(\varphi^\mathcal{D}_k-\langle\varphi^\mathcal{D}_k,\varphi^\rho_1\rangle_{L^2}\cdot \varphi^\rho_1-\widetilde{\varphi^\mathcal D_k})\\
		\ge&\lambda_1(M,g,\rho)\cdot \langle\varphi^\mathcal{D}_k,\varphi^\rho_1\rangle_{L^2}^2+\lambda_2(M,g,\rho)\cdot |\widetilde{\varphi^\mathcal D_k}|_{L^2}^2+\\
		&\lambda_{N+1}(M,g,\rho)(1-\langle\varphi^\mathcal{D}_k,\varphi^\rho_1\rangle_{L^2}^2-|\widetilde{\varphi^\mathcal D_k}|_{L^2}^2),
		\end{aligned}
		\end{equation*}
		which implies
		\begin{equation}\label{2N}
			1\ge |\widetilde{\varphi^\mathcal D_k}|_{L^2}^2\ge \frac{\lambda_1(M,g,\rho)\cdot \langle\varphi^\mathcal{D}_k,\varphi^\rho_1\rangle_{L^2}^2+\lambda_{N+1}(M,g,\rho)(1-\langle\varphi^\mathcal{D}_k,\varphi^\rho_1\rangle_{L^2}^2)-b}{\lambda_{N+1}(M,g,\rho)-\lambda_2(M,g,\rho)}.
		\end{equation}
		From \eqref{lambda1}, one has that for all $2\le k\le N$,
		\[
		\langle\varphi^\mathcal{D}_k,\varphi^\rho_1\rangle_{L^2}^2\to 0,\qquad \text{as }C\to +\infty.
		\]
		Combining this with \eqref{2N} and \eqref{conRobDiri}, we conclude that for all $2\le k\le N$,
		\begin{equation}\label{con2N}
			|\widetilde{\varphi^\mathcal D_k}|_{L^2}^2\to 1,\qquad \text{as }C\to +\infty.
	    \end{equation} 
	    Thus for all $2\le k\le N$,
	    \begin{equation}\label{2k}
	    	\begin{aligned}
	    		|\varphi_k^\mathcal D-P^\rho_N\varphi_k^\mathcal D|^2= |\varphi_k^\mathcal D|^2-|P^\rho_N\varphi_k^\mathcal D|^2\le |\varphi_k^\mathcal D|^2-|\widetilde{\varphi^\mathcal D_k}|^2\to0,\qquad\text{as }C\to\infty.
	    	\end{aligned}
	    \end{equation}
	    
	    Finally, combining \eqref{phiD1} and \eqref{2k}, the proof is complete.
	\end{proof}
	
	Thus by Proposition \ref{Diri-Robin}, if we assume $\rho$ is a sufficiently large constant function, then the family  $\mathcal F(B)$ defined in \eqref{mathcalF} remains stable  for the Robin Laplacian on $M$ with boundary parameter   $\rho$. Combining this with Proposition \ref{DtoR}, one can prescribe finite eigenvalues of the Robin Laplacian on $M$ with arbitrary positive Robin parameter $\rho \in C_+(\partial M)$:
	
	\begin{proof}[Proof of Theorem \ref{prevalueRobin} and Theorem \ref{2dimRobin}]
		Let $C$ be a sufficiently large constant, and consider the constant boundary function  $\rho_C(x)=C$ on $\partial M$. According Proposition \ref{DtoR} (with $\bar\rho=\rho_C$), one immediately get a stable family of the Robin Laplacian on $M$ with boundary function $\rho$:
		\[
		F^\varepsilon: B\to \mathcal M(M),\qquad x\to h_\varepsilon^2\cdot \mathcal F(x),
		\]
		where $h_\varepsilon$ is the conformal factor in the proof of Proposition \ref{DtoR}, and $\mathcal F(x)$ is the family  of stable metrics given in \eqref{mathcalF}. By construction, $h_\varepsilon \to 1$ uniformly on any compact subset in $M \setminus \partial M$. So we can multiply $F^\varepsilon(x)$ by a factor very close to 1, to obtain metrics with prescribed volume $V$ while maintaining stability for  sufficiently small $\varepsilon>0$. Thus, we can prescribe both the first $N$ eigenvalues and the volume of $M$ simultaneously, completing the proof of Theorem \ref{prevalueRobin}.
		
		The proof of Theorem \ref{2dimRobin} is very similar to that of Theorem \ref{prevalueRobin}: one only needs to replace the family of stable metrics in the proof of \cite[Theorem 1.2]{HW} by the family of stable metrics in the proof of \cite[Theorem 1.1]{H}. 
	\end{proof}

	\section{Proof of Theorem \ref{premulRobin} and Theorem \ref{prediffeigen}}\label{proof2}
	
	We proceed by the same strategy. Namely, we first examine the corresponding Dirichlet eigenvalue problem and then apply Proposition \ref{DtoR} and Proposition \ref{Diri-Robin} to establish the desired results.
		
	To achieve this, for each point $x$, we associate to the graph $G_N^{\gamma^x}$ (defined in \S\ref{Qgraph}) a family of Riemannian manifolds $\Omega_\varepsilon^x$, constructed by gluing Euclidean open sets along specified boundaries. We refer to such manifolds as locally Euclidean manifolds. Then we impose a special mixed boundary condition on $\Omega_\varepsilon^x$ and consider corresponding Laplacian. The key is to prove that the Laplacian on  $\Omega_\varepsilon^x$ with this mixed boundary condition and the operator $\bar \Delta_{\gamma^x}$ (defined in \S\ref{Qgraph}) on $G_N^{\gamma^x}$ has  $N$-spectral difference converging to 0 as $\varepsilon \to 0$.

	To construct $\Omega_\varepsilon^x$, we start with $S_{N+1}$, the star graph with $N+1$ vertices. Let  $W_{N+1}$ be the radius 1 Euclidean tubular neighborhood  of  $S_{N+1}$, constructed in \S IV of \cite{CV2} as follows: Choose a constant $K$ such that the sphere  $S^{n-1}_K \subset \mathbb R^{n}$ (of radius $K$) admits  at least $N$ points with pairwise distances strictly greater than 2. Embed $S_{N+1}$ into $\mathbb R^n$ by placing the center vertex at the origin of $\mathbb R^n$, and placing the remaining vertices on $S^{n-1}_K$ such that their pairwise distances are greater than 2. Then, $W_{N+1}$ can be taken as the Euclidean tubular neighborhood of radius 1 of $S_{N+1}$ in $\mathbb R^n$. 
	
	For any $1\le l\le N$, let  $X_{l,\varepsilon}$ be a scaled copy of $W_{N+1}$ by a factor $\varepsilon$. Given any point $x=(x_{ij},x_k)$ in $\mathbb{R}^{\frac{N(N+1)}{2}}$, we construct the locally Euclidean manifold  $\Omega_\varepsilon^x$ as follows: First for any  $1\leq i<j \le N$ we 	
	connect $X_{i,\varepsilon}$ and $X_{j,\varepsilon}$ by the Euclidean cylinder
	\[
	C_{ij,\varepsilon}= [0,1+x_{ij}] \times B^{n-1}(\varepsilon),
	\]
	where $B^{n-1}(\varepsilon)$ is the Euclidean ball in $\mathbb R^{n-1}$ of radius $\varepsilon$. Now for each $1\leq k\leq N$, the domain $X_{k,\varepsilon}$ has only one boundary component that is not connected to any cylinder $C_{ij,\varepsilon}$. We glue the Euclidean cylinder
	\[
	C_{k,\varepsilon}=[0, 1+x_k] \times B^{n-1}(\varepsilon)
	\]
	to this boundary curve along the end $\{1+x_k\}\times B^{n-1}(\varepsilon)$. The resulting locally Euclidean manifold  obtained by this   procedure is denoted by  $\Omega_\varepsilon^x$.
	
	\begin{proposition} \label{graph-domain}
		For any $\alpha >0$, there exists $\varepsilon_0=\varepsilon_0(x,\alpha)> 0$ such that for all $\varepsilon \in (0,  \varepsilon_0)$, the $N$-spectral difference between $\bar \Delta_{\gamma^x}$ and the Laplacian $\Delta_\Omega^{x,\varepsilon}$ of $\Omega^x_\varepsilon$ with ``Dirichlet boundary conditions on
		\[
		\{0\} \times B^{n-1}(\varepsilon)\text{ of $C_{k,\varepsilon}$}, \qquad \forall\ 1 \le k \le N
		\]
		and Neumann boundary conditions on all other boundary components" is less than $\alpha$.
	\end{proposition}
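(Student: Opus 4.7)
The plan is to realize Proposition \ref{graph-domain} as an instance of the classical thin-tube spectral convergence of Colin de Verdi\`ere \cite{CV2}, combined with the abstract criterion of Theorem \ref{ct2} in the Appendix. The first move is to use the isometry $I_{\gamma^x}$ from (\ref{isometry}) to identify $\bar\Delta_{\gamma^x}$ with the standard Dirichlet-Kirchhoff quantum graph Laplacian $\Delta_{\gamma^x}$ on $G_N^{\gamma^x}$. Since $\int_0^1\varphi_{ij}=\int_0^1\varphi_k=1$, one has $l_{ij}=1+x_{ij}$ and $l_k=1+x_k$, so the cylinders $C_{ij,\varepsilon}$ and $C_{k,\varepsilon}$ carry exactly the lengths of the corresponding edges of $G_N^{\gamma^x}$. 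The task is thereby reduced to comparing the first $N$ eigenvalues (together with eigenspaces) of $\Delta_{\gamma^x}$ on $G_N^{\gamma^x}$ with those of $\Delta_\Omega^{x,\varepsilon}$.

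Next I would build two almost-isometric comparison maps. Define an extension $J_\varepsilon:L^2(G_N^{\gamma^x})\to L^2(\Omega_\varepsilon^x)$ by setting, on each cylinder, $J_\varepsilon f(t,y)=(\vol B^{n-1}(\varepsilon))^{-1/2} f(t)$, and extending across each junction $X_{i,\varepsilon}$ by an $\varepsilon$-rescaling of a fixed smooth interpolating profile on $W_{N+1}$ that matches the prescribed boundary data on the attached cylinder cross-sections. Conversely define $P_\varepsilon:L^2(\Omega_\varepsilon^x)\to L^2(G_N^{\gamma^x})$ on each cylinder as $(\vol B^{n-1}(\varepsilon))^{1/2}$ times the cross-sectional average. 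Since each $X_{i,\varepsilon}$ has volume $O(\varepsilon^n)$ while each cylinder has volume $O(\varepsilon^{n-1})$, both maps preserve $L^2$-norms up to an error of $O(\varepsilon)$.

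The substantive step is to show that $J_\varepsilon$ and $P_\varepsilon$ intertwine the quadratic forms up to $o(1)$ on the first $N$ eigenspaces. On each cylinder the identity $\int_{C_{ij,\varepsilon}}|\nabla J_\varepsilon f|^2=\vol B^{n-1}(\varepsilon)\int_0^{l_{ij}}|f'|^2$ is exact; the junction contribution is bounded by $\varepsilon^{n-2}$ times the Dirichlet energy of the fixed profile on $W_{N+1}$, which after the $(\vol B^{n-1}(\varepsilon))^{-1/2}$ normalization tends to $0$. In the other direction, for an $L^2$-normalized $u$ with uniformly bounded Rayleigh quotient, the restriction of $u$ to each rescaled junction is close to its average: the first non-zero Neumann eigenvalue of $X_{i,\varepsilon}$ scales like $\varepsilon^{-2}$ by dilation, forcing the oscillation of $u$ on $X_{i,\varepsilon}$ to be $o(1)$ in $L^2$. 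This recovers continuity and Kirchhoff conditions at the interior vertices $v_i$, while the Dirichlet condition at each $u_k$ passes directly from $\{0\}\times B^{n-1}(\varepsilon)$ to the boundary vertex. Plugging the pair $(J_\varepsilon,P_\varepsilon)$ into the abstract framework of Theorem \ref{ct2} then yields the claimed $N$-spectral difference estimate.

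The main obstacle is the junction analysis: one must prove that low-energy modes of $\Delta_\Omega^{x,\varepsilon}$ are essentially constant on each rescaled $X_{i,\varepsilon}$, so that they descend to continuous functions on the quantum graph and so that the Kirchhoff condition at $v_i$ emerges from the Neumann boundary condition on the unglued part of $\partial X_{i,\varepsilon}$. This is precisely Lemma 5.1 of \cite{CV2}, whose geometric input is the placement of $N$ points on $S^n_K$ at mutual distance greater than $2$, ensuring that the cylinder attachments are well separated inside $W_{N+1}$ and that the resulting Poincar\'e-type estimate scales correctly under the $\varepsilon$-dilation. Once this junction estimate is imported, the remainder reduces to routine bookkeeping within the spectral convergence framework of the Appendix.
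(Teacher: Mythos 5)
Your overall route is the same as the paper's (reduce to $\Delta_{\gamma^x}$ via the isometry \eqref{isometry}, extend graph functions to $\Omega^x_\varepsilon$ by constants on cross-sections, and control the junctions by a Poincar\'e-type estimate on the rescaled $W_{N+1}$), but there are two concrete problems. First, the abstract tool you invoke is the wrong one: Theorem \ref{ct2} requires a \emph{fixed} form domain carrying a sequence of inner products and forms with $Q\le Q_n$ and pointwise convergence, which is the situation of Proposition \ref{DtoR}, not of a graph-versus-thin-domain comparison where the form domains are entirely different Hilbert spaces. No pair of quasi-unitary maps $(J_\varepsilon,P_\varepsilon)$ puts you in the hypotheses of Theorem \ref{ct2}. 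The paper instead uses only the one-sided embedding $J^x_\varepsilon$, writes $\mathcal H^\varepsilon=\mathcal H^\varepsilon_0+\mathcal H^{1,\varepsilon}_\infty+\mathcal H^{2,\varepsilon}_\infty$ with gradient-orthogonality of $\mathcal H^\varepsilon_0$ against the rest, proves the lower bound \eqref{lowbd} of order $\varepsilon^{-1}$ on the complement, and then applies Theorem \ref{ct1}; if you prefer your two-map formulation you would have to go through Lemma \ref{crit} directly and verify its hypotheses, which your sketch does not do.

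Second, the heart of the proposition --- your ``main obstacle'' --- is simply outsourced to a citation rather than proved, and the citation does not deliver it in the form needed here. What must be shown is a quantitative statement: every function in the complement of the extended graph functions has Rayleigh quotient at least $C\varepsilon^{-1}$ (equivalently, that low-energy modes of $\Delta_\Omega^{x,\varepsilon}$ are captured by $J^x_\varepsilon(H^1_0(G_N^{\gamma^x}))$ in the sense of quadratic forms). Your junction argument only notes that the Neumann gap of $X_{i,\varepsilon}$ scales like $\varepsilon^{-2}$, which controls oscillation on the junctions but says nothing about the interaction between the junction modes and the cylinder modes, nor about how the averaged function is corrected to be continuous (hence in $H^1_0$ of the graph) at the vertices without losing energy control. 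In the paper this is exactly where the work lies: the harmonic-extension space $\mathcal H^{1,\varepsilon}_\infty$, the Poincar\'e constant $C_2>0$ on $W_{N+1}$ under mean-zero conditions on the attachment balls $B_1,\dots,B_N$, and crucially the angle estimate $C_3<1$ in \eqref{C3} that controls the cross terms between $\mathcal H^{1,\varepsilon}_\infty$ and $\mathcal H^{2,\varepsilon}_\infty$ after rescaling. Also note that the boundary condition here is mixed (Dirichlet only on the end caps $\{0\}\times B^{n-1}(\varepsilon)$, Neumann elsewhere), so the estimate is not literally the one in \cite{CV2}; the paper flags this by saying its proof is only ``akin to'' Theorem IV.1 there. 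Without supplying this estimate, your argument gives at best one-sided eigenvalue bounds and does not yield the claimed $N$-spectral difference.
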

	
	\begin{proof}
		Under the isometry $I_\gamma$ in \eqref{isometry}, the operator $\bar \Delta_{\gamma^x}$ is isospectral to the operator  $\Delta_{\gamma^x}$. Therefore it suffices to prove that the $N$-spectral difference between $\Delta_{\gamma^x}$ and $\Delta_\Omega^{x,\varepsilon}$ tends to zero as $\varepsilon \to 0^+$. The proof follows a strategy similar to  Theorem IV.1 in \cite{CV2} (with necessary modifications).
		
		Consider the injective map 
		\begin{equation}
			J^x_\varepsilon:\ H^1_0(G^{\gamma^x}_N)\to H^1(\Omega^x_\varepsilon),\qquad f\mapsto J^x_\varepsilon(f)
		\end{equation}
		defined as follows:
		\begin{itemize}
			\item On $C_{ij,\varepsilon}$, for $y_1\in [0,1+x_{ij}]$ and $y'\in B^{n-1}(\varepsilon)$,
			\[
			J^x_\varepsilon(f)(y_1,y')=\big(\frac{1}{\omega_{n-1}\varepsilon^{n-1}}\big)^{\frac 12} f_{ij}(y_1)
			\]
			where $\omega_{n-1}$ denotes the volume of unit ball in $\mathbb R^{n-1}$.
			\item On $C_{k,\varepsilon}$, for $y_1\in [0,1+x_{k}]$ and $y'\in B^{n-1}(\varepsilon)$,
			\[
			J^x_\varepsilon(f)(y_1,y')=\big(\frac{1}{\omega_{n-1}\varepsilon^{n-1}}\big)^{\frac 12} f_k(y_1).
			\]
			\item For $y\in X_{l,\varepsilon}$,
			\[
			J^x_\varepsilon(f)(y)=\big(\frac{1}{\omega_{n-1}\varepsilon^{n-1}}\big)^{\frac 12} f(v_i).
			\]
		\end{itemize}
		It is straightforward to verify  
		\[
		\begin{cases}
			\int_{\Omega^x_\varepsilon} |J^x_\varepsilon(f)|^2 \mathrm d V=|f|_{L^2(G^{\gamma^x}_N)}^2+O(\varepsilon),\\
			q_{\Omega^x_\varepsilon}(J^x_\varepsilon(f)):=\int_{\Omega^x_\varepsilon} |\nabla J^x_\varepsilon(f)|^2 \mathrm d V=q_{\gamma^x}(f),
		\end{cases}
		\] 
		where $q_{\gamma^x}$ is the quadratic form defined in \S\ref{Qgraph}. Thus the $N$-spectral difference between $\Delta_{\gamma^x}$ and the operator $\widetilde{\Delta_\Omega^{x,\varepsilon}}$ 
		associated with $q_{\Omega^x_\varepsilon}$ on $J^x_\varepsilon(H^1_0(G^{\gamma^x}_N))$ can be  arbitrarily small as $\varepsilon \to 0+$. It remains to prove that the $N$-spectral difference between $\widetilde{\Delta_\Omega^{x,\varepsilon}}$ and $\Delta_\Omega^{x,\varepsilon}$ tends to zero as $\varepsilon \to 0+$. For this purpose, we analyze  the spaces 
		\[
		\mathcal H^\varepsilon_0:=J^x_\varepsilon(H^1_0(G^{\gamma^x}_N)) 
		\]
		and 
		\[
		\mathcal H^\varepsilon:=\{f\in H^1(\Omega_\varepsilon^x)|\ f=0  \text{ on }\{0\} \times B^{n-1}(\varepsilon)\text{ of $C_{k,\varepsilon}$}, \  \forall\ 1 \le k \le N\}
		\]
		which is the domain of the quadratic form associated with $\Delta_\Omega^{x,\varepsilon}$.
		
		Let $\mathcal H'_\varepsilon$ be the subspace of $\mathcal H^\varepsilon$ consisting of functions $f$ that satisfy:
		\begin{itemize}
			\item[(1)] On each cylinder  $C_{k,\varepsilon}$ and $C_{ij,\varepsilon}$,  the function $f$ is independent of $y'$ and depends linearly with respect to $y_1$.
			\item[(2)] On each $\partial \Omega_\varepsilon^x\cap\partial X_{l,\varepsilon}$, $\partial f/\partial\nu=0$ where $\nu$ is the  outward unit normal vector on $\partial \Omega_\varepsilon^x\cap\partial X_{l,\varepsilon}$.
			\item[(3)] On each  $X_{l,\varepsilon}$, $f$ is harmonic.
		\end{itemize}
		Denote $\mathcal H^{''}_\varepsilon=\mathcal H'_\varepsilon\cap \mathcal H^\varepsilon_0$, and define:
		\begin{itemize}
			\item $\mathcal H^{1,\varepsilon}_\infty=$  the orthogonal complement of $\mathcal H^{''}_\varepsilon$ in $\mathcal H'_\varepsilon$ under the inner product
		$
			\langle \nabla \cdot, \nabla \cdot\rangle_{L^2(\Omega^x_\varepsilon)},
		$
			\item $\mathcal H^{2,\varepsilon}_\infty=\{f\in \mathcal H^\varepsilon|\text{ For any cross section}  \{y_1\} \times B^{n-1}(\varepsilon) \text{ of the cylinder}$ 
			\\ \indent \qquad\;\qquad\qquad $C_{ij,\varepsilon}$ and $C_{k,\varepsilon}$, $\int_{B^{n-1}(\varepsilon)} f(y_1,y')\mathrm{d}y'=0\}.
			$
		\end{itemize} 
		It's not hard to check that 
		\[
		\begin{aligned} 
		\mathcal H^\varepsilon_0+\mathcal H^{1,\varepsilon}_\infty=\{f\in \mathcal H^\varepsilon|&\text{ On each cylinder of $\{C_{ij,\varepsilon},C_{k,\varepsilon}\}$, $f$ is independent of $y'$;}\\
		&\text{ On each $\partial \Omega_\varepsilon^x\cap\partial X_{l,\varepsilon}$, $\partial f/\partial\nu=0$;}\\
			&\text{ On each  $X_{l,\varepsilon}$, $f$ is harmonic}\}.
		\end{aligned} 
		\]
		Thus $\mathcal H^\varepsilon_0+\mathcal H^{1,\varepsilon}_\infty+\mathcal H^{2,\varepsilon}_\infty=\mathcal H^\varepsilon$. 
		
		For any $f\in \mathcal H_0^\varepsilon$ and $f_1\in \mathcal H^{1,\varepsilon}_\infty$, let $\tilde f\in\mathcal H^{''}_\varepsilon$ denote the function satisfying
		\[
		\tilde f=f\text{ on }X_{l,\varepsilon},\ \forall 1\le l\le N.
		\]
		Then
		\[
		\begin{aligned}
			 &\int_{\Omega_\varepsilon^x} \langle \nabla f,\nabla f_1\rangle \mathrm dV\\
			=&\sum_{1\le i<j\le N}\int_{C_{ij,\varepsilon}} \langle \nabla f,\nabla f_1\rangle \mathrm dV+\sum_{k=1}^N \int_{C_{k,\varepsilon}} \langle \nabla f,\nabla f_1\rangle \mathrm dV\\
			=&\sum_{1\le i<j\le N}\int_{\partial C_{ij,\varepsilon}} f\frac{\partial f_1}{\partial\nu} \mathrm d\sigma+\sum_{k=1}^N\int_{\partial C_{k,\varepsilon}} f\frac{\partial f_1}{\partial\nu} \mathrm d\sigma\\
			=&\sum_{1\le i<j\le N}\int_{\partial C_{ij,\varepsilon}} \tilde f\frac{\partial f_1}{\partial\nu} \mathrm d\sigma+\sum_{k=1}^N\int_{\partial C_{k,\varepsilon}} \tilde f\frac{\partial f_1}{\partial\nu} \mathrm d\sigma\\
			=&\int_{\Omega_\varepsilon^x} \langle \nabla \tilde f,\nabla f_1\rangle \mathrm dV\\
			=&0,
		\end{aligned}
		\]
		where the third equality uses the fact that $f_1$ only depends on the variable  $y_1$ on each cylinder $C_{ij,\varepsilon}$ and $C_{k,\varepsilon}$. For any $f\in \mathcal H_0^\varepsilon$ and $f_2\in \mathcal H^{2,\varepsilon}_\infty$, 
		\[
		\begin{aligned}
			 &\int_{\Omega_\varepsilon^x} \langle \nabla f,\nabla f_2\rangle \mathrm dV\\
			=&\sum_{1\le i<j\le N}\int_{C_{ij,\varepsilon}} \langle \nabla f,\nabla f_2\rangle \mathrm dV+\sum_{k=1}^N \int_{C_{k,\varepsilon}} \langle \nabla f,\nabla f_2\rangle \mathrm dV\\
			=&\sum_{1\le i<j\le N}\left(\int_{\partial C_{ij,\varepsilon}} f_2\frac{\partial f}{\partial\nu} \mathrm d\sigma+\int_{C_{ij,\varepsilon}}f_2\Delta f\mathrm dV\right)\\
			 &+\sum_{k=1}^N\left(\int_{\partial C_{k,\varepsilon}} f_2\frac{\partial f}{\partial\nu} \mathrm d\sigma+\int_{C_{k,\varepsilon}}f_2\Delta f\mathrm dV\right)
		\end{aligned}
		\]
		(the Laplacian above is positive operator). Since $f$ only depends on the variable  $y_1$ on each cylinder $C_{ij,\varepsilon}$ and $C_{k,\varepsilon}$, and for any cross section $\{y_1\} \times B^{n-1}(\varepsilon)$ of these cylinders , one has 
		\[\int_{B^{n-1}(\varepsilon)} f_2(y_1,y')\mathrm{d}y'=0,\]
		it follows that
		\[
		\int_{\Omega_\varepsilon^x} \langle \nabla f,\nabla f_2\rangle \mathrm dV=0.
		\]
		Thus, for any $f\in \mathcal H_0^\varepsilon, f_1+f_2\in \mathcal H^{1,\varepsilon}_\infty+\mathcal H^{2,\varepsilon}_\infty$, one has
		\[
		\int_{\Omega_\varepsilon^x} \langle \nabla f,\nabla (f_1+f_2)\rangle \mathrm dV=0.
		\]
		
		Next, we \textbf{claim} that if $\varepsilon$ is sufficiently small, there exists $C>0$ such that 
		\begin{equation}\label{lowbd}
			\int_{\Omega_\varepsilon^x} |\nabla f|^2\mathrm dV\ge C\varepsilon^{-1}\int_{\Omega_\varepsilon^x} f^2 \mathrm dV,\qquad \forall f\in \mathcal H^{1,\varepsilon}_\infty+\mathcal H^{2,\varepsilon}_\infty.
		\end{equation}
		Assuming this claim, the remainder of the proof of  Proposition \ref{graph-domain} follows by  standard arguments analogous  to the proof of Theorem IV.1 in \cite{CV2}, namely, one directly apply Theorem \ref{ct1}. Thus we omit the details for brevity.

		It remains to prove the claim. 		
		We first take $\varepsilon=1$. For any $f\in \mathcal H^{1,1}_\infty$, if 
		\[
		\sum_{l=1}^N  \int_{X_{l,1}} |\nabla f|^2\mathrm dV=0,
		\]
		then $f$ must be constant on each $X_{l,1}$. By definition of $\mathcal H^{1,1}_\infty$,  $f$ is zero. Since  $\dim(\mathcal H^{1,1}_\infty)=N^2-N$, there exists $C_1>0$ such that 
		\begin{equation}\label{C1H11}
		\int_{\Omega^x_1} |\nabla f|^2\mathrm dV\ge \sum_{l=1}^N \int_{X_{l,1}} |\nabla f|^2\mathrm dV\ge C_1\int_{\Omega^x_1} f^2 \mathrm dV,\qquad \forall f\in \mathcal H^{1,1}_\infty.
		\end{equation}
		For $\varepsilon<1$, a scaling argument yields 
		\begin{equation}\label{lowbd1}
			\int_{\Omega^x_\varepsilon} |\nabla f|^2\mathrm dV\ge \sum_{l=1}^N \int_{X_{l,\varepsilon}} |\nabla f|^2\mathrm dV\ge C_1 \varepsilon^{-1}\int_{\Omega^x_\varepsilon} f^2 \mathrm dV,\qquad \forall f\in \mathcal H^{1,\varepsilon}_\infty.
		\end{equation}
		
		The boundary $\partial W_{N+1}$ consists of $N$ disjoint unit balls $B_1,\cdots, B_N$ in $\mathbb R^{n-1}$, each associated with a boundary points of $S_{N+1}$. We now prove that the constant
		\begin{equation}\label{C2}
		C_2:=\inf\bigg\{\frac{\int_{W_{N+1}}|\nabla f|^2\mathrm dV}{\int_{W_{N+1}}f^2 \mathrm dV}\bigg|\ 0\neq f\in H^1(W_{N+1}),\ \int_{B_i} f\mathrm d\sigma=0,\ \forall 1\le i\le N\bigg\}
		\end{equation}
		is greater than 0. Suppose to the contrary that $C_2=0$, then there exists a sequence of functions $\{f_n\}_{n=1}^\infty$ in 
		\[
		\{f\in H^1(W_{N+1})|\ \int_{B_i} f\mathrm d\sigma=0,\ \forall 1\le i\le N\}\] 
		such that 
		\[
		\int_{W_{N+1}}f_n^2 \mathrm dV=1,\qquad \lim_{n\to\infty} \int_{W_{N+1}}|\nabla f_n|^2\mathrm dV=0.
		\]
		Thus, there exists a sub-sequence (still denoted by $\{f_n\}$) and a function $f_*\in H^1(W_{N+1})$ such that $f_n\rightharpoonup f_*$ weakly in $H^1(W_{N+1})$. This implies
		\begin{equation}\label{f*}
		\int_{W_{N+1}}f_*^2 \mathrm dV=1,\qquad \int_{B_i} f_*\mathrm d\sigma=0,\ \forall 1\le i\le N.
		\end{equation}
		Moreover, 
		\[
		\int_{W_{N+1}}|\nabla f_*|^2\mathrm dV\le \lim_{n\to\infty} \int_{W_{N+1}}|\nabla f_n|^2\mathrm dV=0.
		\]
		Hence $f_*$ is a constant function, which contradicts with \eqref{f*}. Therefore, $C_2>0$.
		Again, by scaling, for any $f\in \mathcal H^{2,\varepsilon}_\infty$, one has
		\begin{equation}\label{lowbd2}
			\begin{aligned}
			&\int_{\Omega^x_\varepsilon}|\nabla f|^2\mathrm dV\\
			=& \sum_{1\le i<j\le N}\int_{C_{ij,\varepsilon}} |\nabla f|^2\mathrm dV+\sum_{k=1}^N \int_{C_{k,\varepsilon}} |\nabla f|^2\mathrm dV+\sum_{l=1}^N \int_{X_{l,\varepsilon}} |\nabla f|^2\mathrm dV\\
			\ge& \varepsilon^{-2}\mu_1\cdot\big(\sum_{1\le i<j\le N}\int_{C_{ij,\varepsilon}}  f^2\mathrm dV+\sum_{k=1}^N \int_{C_{k,\varepsilon}}  f^2\mathrm dV\big)+\varepsilon^{-2}C_2\cdot \sum_{l=1}^N \int_{X_{l,\varepsilon}}  f^2\mathrm dV\\
			\ge& \varepsilon^{-2}\min(\mu_1,C_2)\int_{\Omega^x_\varepsilon} f^2\mathrm dV,
			\end{aligned} 
		\end{equation}
		where $\mu_1$ is the first nonzero eigenvalue of the Neumann Laplacian on $B^{n-1}$.

		For any function $f=f_1+f_2 \in \mathcal H^{1,\varepsilon}_\infty+\mathcal H^{2,\varepsilon}_\infty$, 
		\begin{equation}\label{111}
			\begin{aligned}
				&\int_{\Omega^x_\varepsilon}|\nabla(f_1+f_2)|^2\mathrm dV\\
				=&\int_{\Omega^x_\varepsilon}|\nabla f_1|^2 \mathrm dV+\int_{\Omega^x_\varepsilon}|\nabla f_2|^2\mathrm dV+2\int_{\Omega^x_\varepsilon}\langle \nabla f_1,\nabla f_2\rangle \mathrm dV\\
				=& \int_{\Omega^x_\varepsilon}|\nabla f_1|^2 \mathrm dV+\int_{\Omega^x_\varepsilon}|\nabla f_2|^2\mathrm dV+2\sum_{l=1}^N\int_{ X_{l,\varepsilon}} \langle \nabla f_1,\nabla f_2\rangle \mathrm dV.
			\end{aligned}
		\end{equation}
		Then, define
		\[
		\mathcal H^1_*:=\{f\in H^1(\cup_{l=1}^N X_{l,1})|\ f=\tilde f|_{\cup_{l=1}^N X_{l,1}}\text{ for some $\tilde f\in \mathcal H^{1,1}_\infty$}\},
		\]
		and
		\[
		\mathcal H^2_*:=\{f\in H^1(\cup_{l=1}^N X_{l,1})|\ f=\tilde f|_{\cup_{l=1}^N X_{l,1}}\text{ for some $\tilde f\in \mathcal H^{2,1}_\infty$}\}.
		\]
		By \eqref{C1H11}, 
		\begin{equation}\label{norm1}
		(\sum_{l=1}^N  \int_{X_{l,1}} |\nabla f|^2\mathrm dV)^{\frac 12}
		\end{equation}
		is a complete norm on $\mathcal H^1_*$. Moreover, since each $X_{l,1}$ is isometric to $W_{N+1}$ for all $1\le l\le N$, and the constant $C_2$ defined in \eqref{C2} is strictly positive, \eqref{norm1} also defines a complete norm on $\mathcal H^2_*$. 
		
		Next, suppose there exists $f_1\in \mathcal H^1_*$ and $f_2\in \mathcal H^2_*$ such that 
		\[
		(\sum_{l=1}^N  \int_{X_{l,1}} |\nabla (f_1+f_2)|^2\mathrm dV)^{\frac 12}=0.
		\] 
		Then $f_1+f_2$ must be constant on each $X_{l,1}$. By the definition of $\mathcal H^1_*$ and $\mathcal H^2_*$, one has
		\[
		f_2=0 \text{ on }\partial X_{l,1}\cap\left(\cup_{i,j}C_{ij,1}\cup_k C_{k,1}\right),\ \forall 1\le l\le N.
		\]
		Hence, $f_1$ is also constant on each $X_{l,1}$, and by the definition of $\mathcal H^1_*$, it follows that $f_1=0$. Consequently, $f_2=0$ as well. Therefore, \eqref{norm1} is also a norm on $\mathcal H^1_*\oplus \mathcal H^2_*$.
		
		For any $f\in \mathcal H^1_*$, let $\hat f$ be the best approximation of $f$ in $\mathcal H^2_*$ with respect to the inner product
		\[
		\langle \nabla \cdot, \nabla \cdot\rangle_{L^2(\cup_{l=1}^N X_{l,1})}.
		\]
		Then, since $\dim \mathcal H^1_*=\dim (\mathcal H^{1,1}_\infty)=N^2-N$ and $\mathcal H^1_*\cap \mathcal H^2_*=\{0\}$, one has
		\begin{equation}\label{C3}
			C_3:=\sup_{0\neq f\in \mathcal H^1_*}\frac{\sum_{l=1}^N  \int_{X_{l,1}} |\nabla \hat f|^2\mathrm dV}{\sum_{l=1}^N  \int_{X_{l,1}} |\nabla f|^2\mathrm dV}<1.
		\end{equation}
		According to \eqref{C3} and scaling, 
		\[
		\begin{aligned}
			\eqref{111}\ge& \int_{\Omega^x_\varepsilon}|\nabla f_1|^2 \mathrm dV+\int_{\Omega^x_\varepsilon}|\nabla f_2|^2\mathrm dV-2C_3^{\frac 12} (\sum_{l=1}^N\int_{ X_{l,\varepsilon}}|\nabla f_1|^2\mathrm dV)^{\frac 12}(\sum_{l=1}^N\int_{ X_{l,\varepsilon}}|\nabla f_2|^2\mathrm dV)^{\frac 12}\\
			\ge& \int_{\Omega^x_\varepsilon}|\nabla f_1|^2 \mathrm dV+\int_{\Omega^x_\varepsilon}|\nabla f_2|^2\mathrm dV-2C_3^{\frac 12} (\int_{\Omega^x_\varepsilon}|\nabla f_1|^2\mathrm dV)^{\frac 12}(\int_{\Omega^x_\varepsilon}|\nabla f_2|^2\mathrm dV)^{\frac 12}\\
			\ge& (1-C_3^{\frac 12})\int_{\Omega^x_\varepsilon}|\nabla f_1|^2 \mathrm dV+(1-C_3^{\frac 12})\int_{\Omega^x_\varepsilon}|\nabla f_2|^2\mathrm dV.
		\end{aligned}
		\]
	    Last, by \eqref{lowbd1} and \eqref{lowbd2}, one gets
	    \[
	    \begin{aligned}
	    	\eqref{111}\ge& (1-C_3^{\frac 12})\cdot\big(C_1\varepsilon^{-1}\int_{\Omega^x_\varepsilon} f_1^2\mathrm dV+\varepsilon^{-2}\min(\mu_1,C_2)\int_{\Omega^x_\varepsilon} f_2^2\mathrm dV\big)\\
	    	\ge& \frac 12 (1-C_3^{\frac 12})\varepsilon^{-1}\min(C_1,\varepsilon^{-1}\mu_1,\varepsilon^{-1} C_2)\int_{\Omega^x_\varepsilon} (f_1+f_2)^2\mathrm dV.
	    \end{aligned}
	    \]
	    This completes the proof of the claim.
	\end{proof}
	
	For the Dirichlet Laplacian $\Delta^\mathcal D(M,g)$, we denote its {\sl distinct} eigenvalues by 
	\[
	0< \Lambda_1^\mathcal D(M,g) < \Lambda_2^\mathcal D(M,g) < \Lambda_3^\mathcal D(M,g) < \cdots \to \infty,
	\]
	Now we prove the Dirichlet counterpart of Theorem \ref{premulRobin}: 
	
	\begin{theorem}\label{preDirimul}
		Let $M$ be a compact smooth $n$-manifold ($n \ge 3$) with nonempty boundary. Given any
		\begin{itemize}
			\item Multiplicity sequence $\{1, m_1,\cdots,m_s\}$ of positive integers, 
			\item Conformal class $[g]$ on $M$,
		\end{itemize}
		there exists a Riemannian metric $\tilde g \in [g]$ such that for all $1\le k \le s$, the eigenvalue  $\Lambda_{k+1}^\mathcal{D} (M, \tilde g)$ has multiplicity  $m_k$.
	\end{theorem}
	
	\begin{proof}
		Throughout the proof, we always assume that $x$ is sufficiently close to the origin of $\mathbb{R}^{\frac{N(N+1)}{2}}$, where $N=m_1+\cdots +m_s$. Since the dimension of $M$ is at least 3, there exists a metric $g_x \in [g]$ such that the metric graph $G_N^{\gamma^x}$ can be isometrically embedded in $(M,g_x)$ without edge crossing, with the boundary of $G_N^{\gamma^x}$ lying on $\partial M$ and intersecting $\partial M$ perpendicularly. This can be achieved in two steps: First, embed $G_N$ topologically into  $(M,g)$ without edge crossings, such that the boundary of $G_N$ lies on $\partial M$ and intersect $\partial M$ perpendicular. Second, conformally adjust the metric $g$ in a small neighborhood around the midpoint of each edge of $G_N$ to match the prescribed edge length. Since the length of each edge in $G_N^{\gamma^x}$ depends smoothly on $x$, one can ensure that $g_x$ is $C^1$-continuous with respect to the parameter $x$.
		
     	Next, following the method in the proof of \cite[Theorem 1.2]{PJSte}, for a sufficiently small $\varepsilon > 0$, one can deform the metric $g_x$ (in a non-conformal way) to a metric $g_x^\varepsilon$ such that $G_N^{\gamma^x}$ remains isometrically embedded in $(M, g_x^\varepsilon)$, and such that the image of $G_N^{\gamma^x}$ admits a neighborhood in $(M, g_x^\varepsilon)$ that is isometric to the domain $\Omega^x_\varepsilon$. For simplicity we also use $\Omega^x_\varepsilon$ to denote that neighborhood.  The existence of such a metric follows from the fact that there is a canonical map from a tubular neighborhood of a line segment in   $(M,g_x)$ (where the metric in the neighborhood is nearly Euclidean) to a tubular neighborhood of the zero section of the normal bundle of that line.
		One can further ensure that
		\[
		\frac {1} {\tau_\varepsilon} g^\varepsilon_x \le g_x \le \tau_\varepsilon g_x^\varepsilon
		\]
		for a family of real numbers $\tau_\varepsilon > 1$ independent of $x$ such that $\tau_\varepsilon \to 1$ as $\varepsilon \to 0$. 
		
		Now we demonstrate that the $N$-spectral difference between $\bar \Delta_{\gamma^x}$ on $G_N^{\gamma^x}$ and the Dirichlet Laplacian on $M$ with a certain metric can be arbitrarily small, through the following three steps: 
		\begin{itemize}
			\item By Proposition \ref{graph-domain}, the $N$-spectral difference between $\Delta_\Omega^{x,\varepsilon}$ of $\Omega^x_\varepsilon$ and $\bar \Delta_{\gamma^x}$ on $G_N^{\gamma^x}$ tends to zero as $\varepsilon \to 0$.
			\item According to \cite[Lemma 5.1]{HW}, there exists a metric $\widetilde {g^\varepsilon_x}$ on $M$ satisfying
			\[
			\widetilde {g^\varepsilon_x}|_{\Omega^x_\varepsilon}=g^\varepsilon_x,
			\]
			such that the $N$ spectral difference between $\Delta^{x,\varepsilon}_\Omega$ on $\Omega^x_\varepsilon$ and the Dirichlet Laplacian $\Delta ^\mathcal D(M,\widetilde {g^\varepsilon_x})$ can be made arbitrarily small as $\varepsilon \to 0$. Moreover, $\widetilde {g^\varepsilon_x}$ can be chosen in the conformal class of $g^\varepsilon_x$, as demonstrated in the proof of \cite[Lemma 5.1]{HW}. 
			Thus, there exists a family of uniformly bounded smooth functions $\{f^h_{x,\varepsilon}\}_{h>0}$ on $M$ such that the $N$-spectral difference between $\Delta^{x,\varepsilon}_\Omega$ and $\Delta^\mathcal D(M, f^h_{x,\varepsilon}\cdot g^\varepsilon_x)$ tends to zero as $h \to 0$. 
			\item Since $\tau_\varepsilon \to 1$ as $\varepsilon \to 0$, the $N$-spectral difference between $\Delta^\mathcal D(M, f^h_{x,\varepsilon}\cdot g^\varepsilon_x)$ and $\Delta^\mathcal D(M, f^h_{x,\varepsilon} \cdot g_x)$ tends to zero as $\varepsilon \to 0$.
		\end{itemize}
		So the $N$-spectral difference between $\Delta^\mathcal D(M, f^h_{x,\varepsilon} \cdot g_x)$ and $\bar \Delta_{\gamma^x}$ tends to zero as   $h, \varepsilon \to 0$. Moreover, by choosing the functions $\{f^h_{x,\varepsilon} \}$ to depend $C^1$-continuously on the parameters $x$, and uniformly in  $\varepsilon$ and $h$, one can make this convergence locally uniform on $x$ .
		
		Let $B \subset \mathbb{R}^{\frac{N(N+1)}{2}}$ be a ball center at the origin with sufficiently small radius. For $x\in B$ and sufficiently small $\varepsilon$ and $h$, the following space is well defined:
		\[
		E_x^{\varepsilon,h}:=\Span\{\varphi_2(x,\varepsilon,h),\cdots,\varphi_N(x,\varepsilon,h)\},
		\]
		where $\{\varphi_2(x,\varepsilon,h),\cdots,\varphi_N(x,\varepsilon,h)\}$ are the eigenfunctions associated with Dirichlet eigenvalues $\{\lambda_2^\mathcal D(M, f^h_{x,\varepsilon} \cdot g_x),\cdots, \lambda_N^\mathcal D(M, f^h_{x,\varepsilon} \cdot g_x)\}$. Following the approach in \S\ref{sm}, one can construct an isometry 
		\[
		I_x^{\varepsilon,h}:E_0\to E_x^{\varepsilon,h}
		\]
		(The definition of $E_0$ appears before Proposition \ref{submer}), which induces the map
		\[
		\Phi_h^\varepsilon: B\to Q(E_0),\qquad x\mapsto q^\mathcal D_{x,\varepsilon,h}\circ I_x^{\varepsilon,h}
		\]
		where $q^\mathcal D_{x,\varepsilon,h}$ is the quadratic form associated with $\Delta^\mathcal D(M, f^h_{x,\varepsilon} \cdot g_x)$, and $Q(E_0)$ is the space of quadratic forms on $E_0$.
		
		Since the $N$-spectral difference between $\Delta^\mathcal D(M, f^h_{x,\varepsilon} \cdot g_x)$ and $\bar \Delta_{\gamma^x}$ tends to zero as $h,\varepsilon \to 0$, the map $\Phi_h^\varepsilon$ converges to the map $F$ constructed in Proposition \ref{submer}. 
		According to Proposition \ref{submer}, the metric $\gamma^0$ on $G_N$ is stable. Therefore, for sufficiently small $h$ and $\varepsilon$, $\Lambda_2\cdot \mathrm{Id}$ is an interior point of $\operatorname{Im}(\Phi_h^\varepsilon)$. This completes the proof of Theorem \ref{preDirimul}.
	\end{proof}
	
	\begin{proof}[Proof of Theorem \ref{premulRobin}]
	By Proposition \ref{Diri-Robin}, the family of metrics
	\[
	\{f^h_{x,\varepsilon} \cdot g_x\}_{x\in B}
	\]
	is a stable family for the Robin Laplacian on $M$ with the constant boundary Robin parameter $\rho =C$,  when $C$ is sufficiently large. Now Theorem  \ref{premulRobin} follows from Proposition \ref{DtoR}.	
	\end{proof}
	
	In the remaining of this section  we prove Theorem \ref{prediffeigen}.
	
	\begin{proof}[Proof of Theorem \ref{prediffeigen}]
		Propositions \ref{Diri-Robin} and \ref{DtoR} allow us to reduce the Robin boundary problem to its Dirichlet counterpart. We therefore focus only on the corresponding Dirichlet boundary problem in what follows.
		
		Since 
		\[
		\lim_{N\to\infty}\frac{\arccos\frac{N-1}{N}}{\arccos \frac{-1}{N}}=0,
		\]
		one can choose a sufficiently large $N$ such that 
		\[
		{a_1}\cdot \frac{(\arccos \frac{-1}{N})^2}{(\arccos\frac{N-1}{N})^2}> a_m+1.
		\]
		By scaling, for any $c>0$, one has
		\[
		\spec(\Delta_{c\gamma^0})=\{c^{-2}\lambda|\ \lambda\in \spec(\Delta_{\gamma^0})\}
		\]
		where $\spec(\Delta_{c\gamma^0})$ and $\spec(\Delta_{\gamma^0})$ denote the eigenvalues of $\Delta_{c\gamma^0}$ and $\Delta_{\gamma^0}$, respectively. Define the graph $\widetilde G$ as the disjoint union of the following $m$ graphs
		\[
		\big\{(G_N, \frac{\arccos\frac{N-1}{N}}{\sqrt{a_i}}\gamma^0)\big\}_{i=1}^m,
		\]
		then the first $m$ eigenvalues of the Dirichlet Laplacian on $\widetilde G$ are precisely $a_1,\cdots,a_m$, while the $(m+1)$-th eigenvalue of $\widetilde G$ is greater than $a_m+1$.
		
		Next, we claim that for a sufficiently large number $\alpha>0$, there exists a bounded domain $(\Omega,\alpha\cdot g)\subset (M,\alpha\cdot g)$ such that
		\begin{itemize}
			\item[(1)] Let $\lambda_1(\Omega)$ be the first eigenvalue  of the Laplacian on $(\Omega,\alpha\cdot g)$ with Dirichlet boundary condition on $\partial M\cap \bar \Omega$ and Neumann boundary condition on $\partial \Omega\cap \inte(M)$, then $\lambda_1(\Omega)>a_m+1$,
			\item[(2)] $\vol(\Omega,\alpha\cdot g)=V$.
		\end{itemize}
		
		To prove this claim, we first note that for fixed $L>0$ and $\varepsilon>0$, the first eigenvalue of the Laplacian on Euclidean domain $B^{n-1}(\varepsilon)\times [0,L]$, with Dirichlet boundary condition on $B^{n-1}(\varepsilon)\times\{0,L\}$ and Neumann boundary condition on $\partial B^{n-1}(\varepsilon)\times [0,L]$, is $\frac{\pi^2}{L^2}$. Let $l$ be any simple curve in $M$ that intersects $\partial M$ perpendicularly at its endpoints. For sufficiently small $\varepsilon > 0$, let $T_\varepsilon(l)$ denote the $\varepsilon$-tubular neighborhood of $l$. By the Tubular Neighborhood Theorem, there exists a diffeomorphism from $T_\varepsilon(l)$ to $B^{n-1}(\varepsilon) \times [0, \Length(l)]$. Since $\varepsilon$ is small, this diffeomorphism is almost an isometry. Consequently, the first eigenvalue of the Laplacian on $T_\varepsilon(l)$, with Dirichlet boundary condition on $\partial M\cap T_\varepsilon(l)$ and Neumann boundary condition on $\partial T_\varepsilon(l)\cap \inte (M)$, is nearly $\pi^2/\Length(l)^2$. So we may take $(\Omega,\alpha\cdot g)$ to be the disjoint union of many tubular neighborhoods of   simple curves in $M$ of length less than $\pi/\sqrt{a_m+1}$, see the graph below:\\
		\includegraphics[scale=0.35]{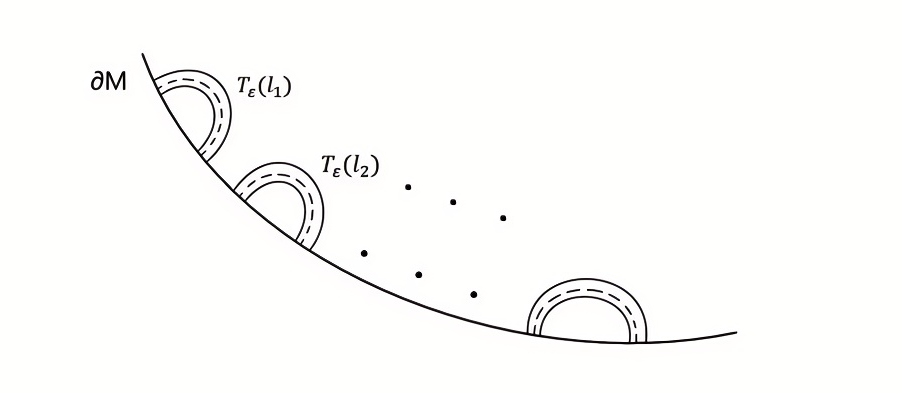}
		
		Let 
		\[X:=(X_1,\cdots,X_m)\in \mathbb R^{\frac{N(N+1)}{2}\times m}\]
		be a point sufficiently close to the origin. Then, as in the proof of Theorem \ref{preDirimul}, there exists a metric $g(X)\in [g]$ such that the following $m$ graphs
		\[
		\bigg\{G_N^{\frac{\arccos\frac{N-1}{N}}{\sqrt{a_i}}\gamma^{X_i}}=:G_N(X_i)\bigg\}_{i=1}^m
		\]
		can be isometrically embedded in $(M, g(X))$ with no edge intersections, and each $G_N(X_i)$ intersects $\partial M$ perpendicularly along its boundary. Moreover, the domain $\Omega$ can be chosen to be disjoint from the embedded graphs $\{G_N(X_i)\}_{i=1}^m$, and  $g(X)=\alpha\cdot g$ on $\Omega$.
		
		Next, as in the proof of Theorem \ref{preDirimul}, we perturb the metric $g(X)$ non-conformally to obtain a new metric $g^{\varepsilon_1}(X)$ such that:
		\begin{enumerate}
			\item The $m$ scaled domains 
			\[
			\big\{ \frac{\arccos\frac{N-1}{N}}{\sqrt{a_i}}\cdot \Omega_{\varepsilon_1}^{X_i} \big\}_{i=1}^m
			\]
			can be isometrically embedded in $(M,g^{\varepsilon_1}(X) )$ (where $\Omega_{\varepsilon_1}^{X_i}$ is defined at the beginning of this section);
			\item There exists a family of constants $\tau_{\varepsilon_1} > 1$ with $\tau_{\varepsilon_1} \to 1$ as $\varepsilon_1 \to 0$, such that
			\[
			\frac {1} {\tau_{\varepsilon_1}} g^{\varepsilon_1}(X) \le g(X) \le \tau_{\varepsilon_1} g^{\varepsilon_1}(X);
			\]
			\item Let $\tilde{\Omega}_{\varepsilon_1}^{X_i}$ denote the image of $\frac{\arccos\frac{N-1}{N}}{\sqrt{a_i}} \cdot \Omega_{\varepsilon_1}^{X_i}$ under the embedding. Then
			\[
			\tilde{\Omega}^{X_i}_\varepsilon\cap \tilde{\Omega}^{X_j}_\varepsilon=\emptyset,\qquad \forall\  1\le i\neq j\le m,
			\]
			and 
			\[
			\Omega\cap \tilde\Omega^{X_i}_\varepsilon=\emptyset,\qquad \forall \ 1\le i\le m;
			\]
			\item $g^{\varepsilon_1}(X)=\alpha\cdot g$ on $\Omega$.
		\end{enumerate}
		
		Next, as in the proof of Theorem \ref{preDirimul}, and again by \cite[Lemma 5.1]{HW}, there exists a family of uniformly bounded smooth functions $\{f^h_{X,\varepsilon_1}\}_{h>0}$ on $M$ such that 
		\begin{enumerate}
			\item the $N$-spectral difference between $\Delta^\mathcal D(M, f^h_{X,\varepsilon_1}g^{\varepsilon_1}(X))$ and the Dirichlet Laplacian $\Delta(\widetilde G)$ on the graph $\widetilde G$ tends to zero as $h,\varepsilon_1\to 0$;
			\item the $N$-spectral difference between 
			\[\Delta^\mathcal D(M, f^h_{X,\varepsilon_1}g^{\varepsilon_1}(X))\text{ and }\Delta^\mathcal D(M, f^h_{X,\varepsilon_1}g(X))\]
			tends to zero as $\varepsilon_1\to 0$;
			\item $f^h_{X,\varepsilon_1}=1$ on $\Omega$, and
			\begin{equation}\label{convol}
			\lim_{h,\varepsilon_1\to 0}\vol(M,f^h_{X,\varepsilon_1}g(X))=V.
			\end{equation}
		\end{enumerate}
		Note that one can apply \cite[Lemma 5.1]{HW} since the first eigenvalue of $(\Omega,\alpha\cdot g)$ and the second eigenvalue of each $\Omega^{X_i}_{\varepsilon_1}$ are greater than $a_m+1$ for sufficiently small ${\varepsilon_1}$.
		
		Let $B \subset \mathbb{R}^{\frac{N(N+1)}{2}\times m}$ be a ball center at the origin with sufficiently small radius. Consider the map
		\[
		\Phi^{\varepsilon_1}_h: B\to\mathbb R^m,\qquad X\mapsto (\lambda_1^\mathcal D(M,f^h_{X,\varepsilon_1}g(X)),\cdots, \lambda_m^\mathcal D(M,f^h_{X,\varepsilon_1}g(X))).
		\]
		By Proposition \ref{simple stable}, 
		\[
		\{(M,f^h_{X,\varepsilon_1}g(X))\}_{X\in B}
		\]
		forms a stable family of metrics, and the point $(a_1,\cdots,a_m)$ lies in the interior of  $\operatorname{Im}(\Phi^{\varepsilon_1}_h)$. 
		
		Finally, to simultaneously prescribe the volume of $M$, we choose $\varepsilon_1$ sufficiently small. By \eqref{convol}, one can normalize the volume of $(M,f^h_{X,\varepsilon_1}g(X))$ to $V$ within the fixed conformal class $[g]$, without affecting the stability of $\{(M,f^h_{X,\varepsilon_1}g(X))\}_{X\in B}$. This completes the proof.
	\end{proof}

	\appendix

	\section{}\label{app}
	
	In this appendix we collect some necessary backgrounds on spectral convergence and stable metrics that are needed.
		
	For any closed positive quadratic form $q$ with domain $D(q)$, there exists  a unique positive self-adjoint operator $A$ (whose domain is dense in $D(q)$) so that $q$ is the quadratic form associated with $A$ (c.f.  Theorem VIII.15 in \cite{RS}). When $A$ possesses discrete spectrum $\{\lambda_i\}_{i=1}^\infty$ satisfying $\lambda_{N+1}>\lambda_N$ for some $N$,   the subspace of $D(q)$ spanned by the first $N$ eigenfunctions of $A$ is well-defined, and is called the \textbf{$N$-eigenspace} of $q$ (resp. $A$).  Unless otherwise specified, all quadratic forms considered in this appendix are assumed to have discrete spectrum.
	
	\setcounter{theorem}{0} 
	\renewcommand{\thetheorem}{A.\arabic{theorem}}
	\subsection{Spectral convergence}\label{Aoe}

	\noindent
	
	Given integer $N>0$, let $E_0$ and $E_1$ be two $N$-dimensional subspaces of a real Hilbert space $(\mathcal{H},\langle\ ,\ \rangle)$. Endow each $E_i$ with an inner product
	\begin{equation*}
		\langle x,y \rangle_i=\langle A_ix,y \rangle,\ x,y\in E_i, \qquad i=0,1,
	\end{equation*}
	where $A_i: E_i \to E_i$ is a strictly positive operator. Moreover, suppose $E_1$ is \textbf{close} to $E_0$, in the sense that $E_1$ is the graph of a  bounded linear map $B\in\mathcal{L}(E_0,E_0^\perp)$. Let $\mathcal{B}=I+B\in\mathcal{L}({E_0,E_1})$ and consider the isometry
	\begin{equation}\label{iso}
		U_{E_0,E_1}=A_1^{-\frac{1}{2}}\mathcal{B}(\mathcal{B}^*\mathcal{B})^{-\frac{1}{2}}A_0^{\frac{1}{2}}
	\end{equation}
	from $(E_0,\langle\ ,\ \rangle_0)$ to $(E_1,\langle\ ,\ \rangle_1)$.
	The following definition is taken from \cite{CV2}:
	\begin{defn}\label{sd}
		For positive quadratic forms  $q_i$ on $(E_i,\langle\ ,\ \rangle_i)$, $i=0,1$, if
		\[\|q_1\circ U_{E_0,E_1}-q_0\|_\infty \leq\varepsilon,\]
		then we say $(E_0,\langle\ ,\ \rangle_0,q_0)$ and $(E_1,\langle\ ,\ \rangle_1,q_1)$ are $\varepsilon$-\textbf{close}. In particular, if $E_i$ is the $N$-eigenspace of a quadratic form $Q_i$  with $q_i=Q_i|_{E_i}$, and if $(E_0,\langle\ ,\ \rangle_0,q_0)$ and $(E_1,\langle\ ,\ \rangle_1,q_1)$ are  $\varepsilon$-close, then we say $Q_0$ and $Q_1$ have   \textbf{$N$-spectral difference $\leq\varepsilon$}.
	\end{defn}

	A useful criterion for $\varepsilon$-closeness was introduced by Y. Colin de Verdi\`ere:
	\begin{lemma}[\cite{CV2}, Critere I.3]\label{crit}
		For any $\varepsilon>0$, there exists $M$ and $\alpha_i>0$ $(1\leq i\leq 5)$ depending on $\varepsilon$, such that if the following conditions hold:
		\begin{itemize}
			\item  $\|q_1\|\leq M$,
			\item  $\|A_0-I\|\leq\alpha_1$, $\|A_1-I\|\leq\alpha_2$ and   $\|B\|\leq\alpha_3$, 
			\item $\max_{1\leq j\leq N}|\lambda_j(q_1)-\lambda_j(q_0)|\leq\alpha_4$, 
			\item $q_1(x+Bx)\geq q_0(x)-\alpha_5|x|^2$ for all $x \in E_0$,
		\end{itemize} then $(E_0,\langle\ ,\ \rangle_0,q_0)$ and $(E_1,\langle\ ,\ \rangle_1,q_1)$ are $\varepsilon$-close.
	\end{lemma}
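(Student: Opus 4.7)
The plan is to extract a uniform operator-norm bound on $T := q_1\circ U_{E_0,E_1} - q_0$, viewed as a self-adjoint operator on the finite-dimensional space $E_0$, by combining a pointwise lower bound (coming from the hypothesis $q_1(x+Bx)\ge q_0(x)-\alpha_5|x|^2$) with a trace upper bound (coming from $|\lambda_j(q_1)-\lambda_j(q_0)|\le\alpha_4$), via a standard eigenvalue squeeze on the $N$-dimensional space $E_0$.

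First, I would show that $U_{E_0,E_1} = A_1^{-1/2}\mathcal{B}(\mathcal{B}^*\mathcal{B})^{-1/2}A_0^{1/2}$ is a small perturbation of the naive graph map $\mathcal{B} = I+B$: for any $\eta>0$, if $\alpha_1,\alpha_2,\alpha_3$ are sufficiently small then
\[
\|U_{E_0,E_1}x - (x+Bx)\| \;\le\; \eta\,\|x\|, \qquad \forall x\in E_0.
\]
This is a direct application of the continuous functional calculus: as $A_0,A_1\to I$ and $B\to 0$ in operator norm, the factors $A_1^{-1/2}$, $A_0^{1/2}$, $(\mathcal{B}^*\mathcal{B})^{-1/2}$ tend to $I$, so the product tends to $I\cdot\mathcal{B}\cdot I\cdot I = \mathcal{B}$. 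Combined with the Lipschitz property $|q_1(y)-q_1(z)|\le C(M)(\|y\|+\|z\|)\|y-z\|$ (which follows from $\|q_1\|\le M$), this yields
\[
|q_1(U_{E_0,E_1}x) - q_1(x+Bx)| \;\le\; C(M)\,\eta\,\|x\|^2.
\]

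Combining this continuity estimate with the pointwise hypothesis gives the lower bound
\[
T(x) \;=\; q_1(U_{E_0,E_1}x)-q_0(x) \;\ge\; -\bigl(\alpha_5+C(M)\eta\bigr)\|x\|^2, \qquad \forall x\in E_0.
\]
For the trace upper bound, because $U_{E_0,E_1}$ is an isometry from $(E_0,\langle\cdot,\cdot\rangle_0)$ to $(E_1,\langle\cdot,\cdot\rangle_1)$, the trace of $q_1\circ U_{E_0,E_1}$ computed in the $A_0$-inner product on $E_0$ equals the trace of $q_1|_{E_1}$ in the $A_1$-inner product on $E_1$, namely $\sum_{j=1}^N\lambda_j(q_1)$. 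Hence
\[
\tr_{E_0}(T) \;=\; \sum_{j=1}^N\bigl(\lambda_j(q_1)-\lambda_j(q_0)\bigr), \qquad |\tr_{E_0}(T)| \;\le\; N\alpha_4.
\]

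Finally, the self-adjoint operator $T$ on the $N$-dimensional $E_0$ has every eigenvalue $\mu_k \ge -(\alpha_5+C(M)\eta)$, while the sum of its eigenvalues is bounded by $N\alpha_4$ in absolute value; hence each $\mu_k$ is trapped in an interval of width $\to 0$ as the $\alpha_i$ and $\eta$ shrink, and in particular $\max_k|\mu_k| \le N\alpha_4 + (N-1)(\alpha_5+C(M)\eta)$. So choosing $\alpha_4,\alpha_5$ small, and then $\alpha_1,\alpha_2,\alpha_3$ small enough to force $\eta$ small, gives $\|T\|<\varepsilon$. The main delicacy is to verify Step 1 quantitatively: the estimate $\|U_{E_0,E_1}x-(x+Bx)\|\le\eta\|x\|$ must depend only on $\alpha_1,\alpha_2,\alpha_3$ (not on $M$ or the Hilbert geometry of $\mathcal H$), so that the Lipschitz constant $C(M)$ enters the final error only multiplicatively with $\eta$ rather than accumulating from multiple sources.
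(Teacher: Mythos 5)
This lemma is quoted in the paper directly from \cite{CV2} (Crit\`ere I.3) without proof, so there is no in-paper argument to compare against; judged on its own, your proof is correct and is essentially a reconstruction of the standard argument. The two pillars both hold up: (i) since $\mathcal{B}^*\mathcal{B}=I+B^*B$, the factors $A_0^{1/2}$, $A_1^{-1/2}$, $(\mathcal{B}^*\mathcal{B})^{-1/2}$ all lie within a distance of $I$ controlled by $\alpha_1,\alpha_2,\alpha_3$ alone (dimension-free continuity of the square root on positive operators), so $\|U_{E_0,E_1}-\mathcal B\|\le\eta(\alpha_1,\alpha_2,\alpha_3)$ and the error enters only through $C(M)\eta$; (ii) because $U_{E_0,E_1}$ is an isometry from $(E_0,\langle\,,\,\rangle_0)$ to $(E_1,\langle\,,\,\rangle_1)$, the form $q_1\circ U_{E_0,E_1}$ is isospectral to $q_1$, so the trace identity $\tr_{E_0}(T)=\sum_j(\lambda_j(q_1)-\lambda_j(q_0))$ is legitimate, and the one-sided bound plus trace bound squeezes every eigenvalue of $T$ into $[-\beta,\,N\alpha_4+(N-1)\beta]$ with $\beta=\alpha_5+C(M)\eta$. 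Two routine points you should make explicit: the hypotheses give the pointwise lower bound in the ambient norm while the eigenvalues of $T$ are taken with respect to $\langle\,,\,\rangle_0$, so one needs the (harmless) comparability $\|x\|_0^2\ge(1-\alpha_1)|x|^2$; and your final constants depend on $N$ as well as on $M$, which is consistent with the statement since $N$ is fixed throughout and $M$ is part of the data the $\alpha_i$ may depend on.
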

	
	\par Let $Q$ be a closed positive quadratic form on $\mathcal{H}$ with discrete spectrum. We say $Q$ satisfies the spectral gap hypothesis ($\ast$) if its eigenvalues fulfill
	\[\label{hoe}
	\tag{$\ast$}\ \lambda_1\leq\cdots\leq\lambda_N\leq\lambda_N+\delta \le \lambda_{N+1}\leq M ,
	\]
	where $M,N,\delta$ will be fixed once and for all. Then we have
	
	\begin{theorem}\cite[Theorem 2.3]{HW}
		\label{ct1}
		Let $Q$ be a positive quadratic form on a Hilbert space $\mathcal{H}$ whose domain admits a  $Q$-orthogonal decomposition $D(Q)=\mathcal{H}_0\oplus\mathcal{H}_\infty$. Suppose $Q_0=Q|_{\mathcal{H}_0}$ (whose eigenvalues are denoted by $\{\mu_i\}$) verifies the hypothesis (\ref{hoe}). 
		{Let $\tilde \delta$ be the minimum of the gap between distinct eigenvalues from $\mu_1$ to $\mu_{N+1}$,}
		then for any $\varepsilon>0$, there exists a constant $C>0$ (depending on $\tilde \delta,M,N$ and $\varepsilon$) such that if $Q(x)\geq C|x|^2$ for all $x\in\mathcal{H}_\infty$, the quadratic forms $Q_0$ and $Q$ have a $N$-spectral difference $\leq\varepsilon$.
	\end{theorem}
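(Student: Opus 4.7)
The plan is to deduce the conclusion from Critère I.3 (Lemma \ref{crit}), applied with $A_0=A_1=I$ and with $E_0,E_1$ the $N$-eigenspaces of $Q_0$ and $Q$, each carrying the inherited $\mathcal{H}$-inner product. For this it suffices to verify, once $C$ is large enough, that (a) $\lambda_{N+1}(Q)\le M$, so that $E_1$ is well defined; (b) $|\lambda_k(Q)-\mu_k|\to 0$ as $C\to\infty$ for $1\le k\le N$; (c) $E_1$ is the graph of a bounded operator $B\in\mathcal{L}(E_0,E_0^\perp)$ with $\|B\|\to 0$ as $C\to\infty$; and (d) the lower bound $Q(x+Bx)\ge Q_0(x)-\alpha_5|x|^2$ for $x\in E_0$. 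Each of the quantitative conditions $\|A_i-I\|\le\alpha_i$ is automatic, and $\|q_1\|\le M$ follows from (b) once eigenvalue closeness is proved.

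The key mechanism is concentration of low-energy functions in $\mathcal{H}_0$. By min-max, $\lambda_k(Q)\le\mu_k$ for every $k$, since $\mathcal{H}_0\subset D(Q)$ and $Q|_{\mathcal{H}_0}=Q_0$; in particular $\lambda_{N+1}(Q)\le M$. For any $\mathcal{H}$-unit eigenfunction $f=f_0+f_\infty$ of $Q$ with eigenvalue at most $M$, $Q$-orthogonality of the decomposition gives
\[
M\ge Q(f)=Q_0(f_0)+Q(f_\infty)\ge C|f_\infty|^2,
\]
so $|f_\infty|\le\sqrt{M/C}$ and $|f_0|\le 1+\sqrt{M/C}$. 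Applied to an $\mathcal{H}$-orthonormal basis of the span of the first $k$ eigenfunctions of $Q$, the $\mathcal{H}_0$-components form a $k$-dimensional subspace (for $C$ large) whose Gram matrix is $O(\sqrt{M/C})$-close to the identity, and on which $Q_0$ is bounded above by $\lambda_k(Q)+\eta(C)$ with $\eta(C)\to 0$. The min-max principle for $Q_0$ then yields $\mu_k\le\lambda_k(Q)+\eta(C)$, establishing (b).

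For (c), the spectral gap $\tilde\delta$ gives the elementary spectral stability: any $g\in\mathcal{H}_0$ of unit $\mathcal{H}$-norm with $Q_0(g)\le\mu_N+o(1)$ has, by expansion in the $Q_0$-eigenbasis, coefficients beyond index $N$ of size $o(1)$, since the corresponding eigenvalues exceed $\mu_N+\tilde\delta$. Applied to $g=f_0/|f_0|$ for unit $f\in E_1$, this shows each element of $E_1$ lies $o(1)$-close in $\mathcal{H}$-norm to $E_0$, so $E_1$ is the graph of a unique $B\in\mathcal{L}(E_0,E_0^\perp)$ with $\|B\|\to 0$ as $C\to\infty$. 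For (d), expand
\[
Q(x+Bx)=Q_0(x)+2Q(x,Bx)+Q(Bx),
\]
and use Cauchy--Schwarz together with $Q(x)\le M|x|^2$ and a uniform bound $Q(Bx)\le(\text{const})|x|^2$ (derived from the fact that $x+Bx\in E_1$ is an eigenvector of $Q$ with eigenvalue $\le M$, so $Q(Bx)\le 2Q(x+Bx)+2Q(x)\le 4M|x|^2$) to absorb the cross term into an $O(\|B\|)|x|^2$ correction.

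The main obstacle is that the decomposition $\mathcal{H}=\mathcal{H}_0\oplus\mathcal{H}_\infty$ is only $Q$-orthogonal, not $\mathcal{H}$-orthogonal, so $\mathcal{H}$-norms do not split additively across it. Consequently, the projections $f\mapsto f_0$ and $f\mapsto f_\infty$ are bounded in the $Q$-norm but not obviously in the $\mathcal{H}$-norm; all $\mathcal{H}$-norm estimates on $f_0$ must be routed through the coercivity $Q(f_\infty)\ge C|f_\infty|^2$ and the bound $Q(f)\le M$, rather than through a naïve Pythagorean identity. The same difficulty arises in step (d), where $Bx\in E_0^\perp$ may a priori carry a nontrivial $\mathcal{H}_\infty$-component on which $Q$ is very large; the argument closes only because this component is itself controlled by $\sqrt{M/C}\,|x|$ through the coercivity, applied now to $x+Bx\in E_1$.
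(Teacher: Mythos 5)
Your steps (a) and (b) — the min--max inequality $\lambda_k(Q)\le\mu_k$ and the reverse inequality via the coercivity bound $|f_\infty|\le\sqrt{M/C}$ and the Gram-matrix argument — are correct and are the standard opening of the argument. The gap is in step (c), and it propagates into (d). The claimed ``elementary spectral stability'' is false as stated: a unit vector $g\in\mathcal{H}_0$ with $Q_0(g)\le\mu_N+o(1)$ need \emph{not} have small components beyond index $N$ when the first $N$ eigenvalues are spread out. For example, with $N=2$, $\mu_1=1$, $\mu_2=10$, $\mu_3=11$, the vector $g=\sqrt{1-s}\,\phi_1+\sqrt{s}\,\phi_3$ satisfies $Q_0(g)\le\mu_2$ for $s$ up to $0.9$; energy alone only yields $\sum_{i>N}a_i^2\le\mu_N/\mu_{N+1}$, which is not $o(1)$. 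So your argument does not establish $\|B\|\to0$, and the same one-vector energy bound fails in the reverse direction (projecting $E_0$ into $E_1$). It is telling that your proposal never uses $\tilde\delta$, the minimal gap between \emph{distinct} eigenvalues among $\mu_1,\dots,\mu_{N+1}$, even though the theorem makes $C$ depend on it: the eigenspace comparison must be done cluster by cluster. The standard mechanism is the weak eigenvalue equation paired against the $Q_0$-eigenbasis: for a unit $Q$-eigenfunction $f=f_0+f_\infty$ with eigenvalue $\lambda$, one has $Q(f,\phi_i)=\lambda\langle f,\phi_i\rangle$ and $Q(f_\infty,\phi_i)=0$ by $Q$-orthogonality, hence $(\mu_i-\lambda)\langle f_0,\phi_i\rangle=\lambda\langle f_\infty,\phi_i\rangle$; summing over all $i$ whose $\mu_i$ lies outside the cluster of $\lambda$ (where $|\mu_i-\lambda|\ge\tilde\delta-o(1)$, using your step (b)) and Bessel's inequality gives $\sum_{i\notin c}\langle f_0,\phi_i\rangle^2\le \big(\tfrac{M}{\tilde\delta-o(1)}\big)^2\tfrac{M}{C}$, which is what actually forces $E_1$ onto the graph of a small $B$ and matches eigenspaces cluster by cluster.

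Step (d) does not close either, even granting small $\|B\|$: your bound $Q(Bx)\le 4M|x|^2$ only gives a cross term $2|Q(x,Bx)|\le 2\sqrt{Q(x)Q(Bx)}=O(M)|x|^2$, not $O(\|B\|)|x|^2$, so you obtain $Q(x+Bx)\ge Q_0(x)-O(M)|x|^2$ rather than $-\alpha_5|x|^2$ with $\alpha_5$ small. Moreover this cannot be repaired by improving the estimate on $Q(Bx)$: the vector $Bx$ may legitimately carry an $\mathcal{H}_\infty$-component of $\mathcal{H}$-norm of order $\sqrt{M/C}$ whose $Q$-energy is of order $C\cdot(M/C)=M$, so $Q(Bx)$ is genuinely of size $M|x|^2$ however small $\|B\|$ is, and Cauchy--Schwarz on the cross term is intrinsically lossy. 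The lower bound $q_1(x+Bx)\ge q_0(x)-\alpha_5|x|^2$ required by Lemma \ref{crit} is obtained instead from the cluster-by-cluster correspondence above (for $x$ in a $\mu$-cluster eigenspace, $x+Bx$ lies, up to small error, in the span of $Q$-eigenfunctions with eigenvalues $\ge\mu-o(1)$), i.e.\ from exactly the perturbation identity your proposal omits. So the overall architecture (reduce to Lemma \ref{crit}, prove eigenvalue closeness by min--max and coercivity) is the right one, but the two estimates that carry the eigenspace information are missing and the stated substitutes are incorrect.
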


	We also need Remark 18 in \cite{PJ}, which is a variant of Theorem I.8 in \cite{CV2}:
	\begin{theorem}[\cite{CV2}, \cite{PJ}]\label{ct2}
		Let $Q$ be a positive quadratic form satisfying the hypothesis (\ref{hoe}), and   $\langle\ ,\ \rangle$   an inner product on $D(Q)$. Consider sequences $\langle\ ,\ \rangle_n$ of inner products and $Q_n$ of quadratic forms on $D(Q)$ satisfying
		\begin{enumerate}
			\item for any $x\in D(Q)$,
			\[\lim_{n\to\infty}|x|_n=|x|, \quad \lim_{n\to\infty}Q_n(x)=Q(x), \quad Q(x)\leq Q_n(x),\]
			\item there exists $C_1,C_2$ and $\varepsilon_n\to 0+$ such that \[C_1|x|\leq|x|_n\leq C_2|x|+\varepsilon_nQ(x)^{\frac{1}{2}}, \quad \forall x\in D(Q).\]
		\end{enumerate}
		Then for any $\varepsilon>0$, there exists  $K=K(\delta,M,N,\varepsilon) \in \mathbb N$ such that $Q$ and $Q_n$ have an $N$-spectral difference $\leq\varepsilon$ for all $n>K$.
	\end{theorem}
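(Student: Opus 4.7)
The plan is to reduce the statement to a single application of Lemma \ref{crit}, viewing the $N$-eigenspace $E_0$ of $Q$ and the (to-be-constructed) $N$-eigenspace $E_1 = E_1^{(n)}$ of $Q_n$ as finite-dimensional subspaces of the common ambient space $D(Q)$ equipped with $\langle\,,\,\rangle$. In this identification $A_0 = I$ on $E_0$, the operator $A_1^{(n)}$ on $E_1^{(n)}$ encodes $\langle\,,\,\rangle_n$ relative to $\langle\,,\,\rangle$, and a graph operator $B_n : E_0 \to E_0^\perp$ parameterizes how $E_1^{(n)}$ sits near $E_0$. The two structural inputs I will repeatedly exploit are the monotonicity $Q(x) \le Q_n(x)$ and the sandwich $C_1 |x| \le |x|_n \le C_2 |x| + \varepsilon_n Q(x)^{1/2}$ with $\varepsilon_n \to 0$.

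First I would establish $\lambda_j(Q_n) \to \lambda_j(Q)$ for every $1 \le j \le N+1$. For the upper bound, the span $W$ of the first $j$ eigenfunctions of $Q$ is a fixed finite-dimensional subspace, so condition (1) yields uniform convergence of $Q_n$ to $Q$ and of $|\cdot|_n$ to $|\cdot|$ on the unit sphere of $W$; plugging $W$ into the min-max formula for $\lambda_j(Q_n)$ gives $\limsup_n \lambda_j(Q_n) \le \lambda_j(Q)$. The lower bound uses the compactness implicit in $Q$ having discrete spectrum: any $|\cdot|_n$-normalized vector $x$ in the $j$-eigenspace of $Q_n$ satisfies $Q(x) \le Q_n(x) \le \lambda_j(Q_n)$, hence has bounded $Q$-graph norm; the compact embedding of the form domain of $Q$ into $\mathcal{H}$ extracts strongly convergent subsequences from any $|\cdot|_n$-orthonormal basis of that eigenspace, and a dimension-count combined with $Q(\text{limit}) \le \liminf_n Q_n(x_n)$ gives $\liminf_n \lambda_j(Q_n) \ge \lambda_j(Q)$. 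With eigenvalue convergence in hand, hypothesis (\ref{hoe}) persists for $Q_n$ once $n$ is large, so $E_1^{(n)}$ is unambiguously defined, and spectral projector convergence along a Riesz contour separating the first $N$ eigenvalues from the rest gives $\|B_n\| \to 0$.

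Next I would verify the remaining hypotheses of Lemma \ref{crit}. For any $x \in E_1^{(n)}$, the bound $Q_n(x) \le \lambda_{N+1}(Q_n) |x|_n^2 \le (M+o(1))|x|_n^2$ combined with the upper estimate in (2) yields $Q(x)(1 - 2M\varepsilon_n^2) \le 2MC_2^2 |x|^2$; once $\varepsilon_n$ is small this forces $Q(x) \le C' |x|^2$ uniformly in $n$, and inserting this bound back into (2) upgrades condition (1) to uniform convergence $|x|_n \to |x|$ on the unit sphere of $E_1^{(n)}$, so $\|A_1^{(n)} - I\| \to 0$. The remaining inequality $q_n(x + B_n x) \ge q_0(x) - \alpha_5 |x|^2$ then follows from $Q \le Q_n$, finite-dimensionality of $E_0$, and $\|B_n\| \to 0$; applying Lemma \ref{crit} concludes the proof. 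The main obstacle is the lower bound in the eigenvalue convergence: the asymmetric slack $\varepsilon_n Q(x)^{1/2}$ in condition (2) prevents a purely finite-dimensional argument, so one genuinely has to exploit the compact embedding of the form domain of $Q$ into $\mathcal{H}$ --- automatic from the discreteness of $\spec(Q)$ --- to extract convergent subsequences from the $n$-dependent test subspaces.
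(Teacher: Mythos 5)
Your overall skeleton (eigenvalue convergence by min--max and compactness, then closeness of the $N$-eigenspaces, then Lemma \ref{crit}) is the standard route; the paper itself does not prove Theorem \ref{ct2} but cites it (Th\'eor\`eme I.8 of \cite{CV2} and Remark 18 of \cite{PJ}), and those proofs follow essentially this plan. However, two of your steps have genuine gaps as written. First, the assertion that $\|B_n\|\to 0$ follows from ``spectral projector convergence along a Riesz contour'' is not justified: the operators associated with $(Q_n,\langle\,,\,\rangle_n)$ and $(Q,\langle\,,\,\rangle)$ are self-adjoint with respect to different inner products, and hypotheses (1)--(2) give only pointwise convergence on fixed vectors together with control of the bottom of the spectrum; no form of norm (or even generalized) resolvent convergence is available --- indeed in the intended applications (degenerating metrics, collapsing tubes) the high eigenvalues are not uniformly controlled, so contour-integral convergence of spectral projections cannot be invoked. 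Since $\|B_n\|\to 0$ is the heart of the theorem, this is a real gap, though it is repairable by the very compactness argument you use for the eigenvalue lower bound: take a $|\cdot|_n$-orthonormal eigenbasis $\varphi_1^{(n)},\dots,\varphi_N^{(n)}$ of $E_1^{(n)}$, note $|\varphi_j^{(n)}|\le C_1^{-1}$ and $Q(\varphi_j^{(n)})\le Q_n(\varphi_j^{(n)})\le M$, extract $\mathcal H$-convergent subsequences, use (2) together with the uniform $Q$-bound to see that the limits are $|\cdot|$-orthonormal and satisfy $Q(\phi_j)\le\liminf_n\lambda_j(Q_n)=\lambda_j(Q)$, and then use the gap $\lambda_{N+1}\ge\lambda_N+\delta$ of (\ref{hoe}) (equality case of the trace/Ky Fan inequality) to conclude that the limit span is exactly $E_0$; a subsequence argument then gives $\sup_{x\in E_1^{(n)},|x|=1}\mathrm{dist}_{\mathcal H}(x,E_0)\to 0$, hence $\|B_n\|\to 0$.

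Second, your derivation of $\|A_1^{(n)}-I\|\to 0$ does not work as stated: condition (1) is pointwise on \emph{fixed} vectors, whereas the unit sphere of $E_1^{(n)}$ moves with $n$, and condition (2) only yields $C_1|x|\le |x|_n\le C_2|x|+\varepsilon_n Q(x)^{1/2}$ with constants $C_1,C_2$ that need not be close to $1$; so the uniform bound $Q\le C'$ on $E_1^{(n)}$ gives comparability of $|\cdot|_n$ and $|\cdot|$ there, not convergence. The fix is to prove $\|B_n\|\to0$ first and then transfer to the fixed space: for $x=y+B_ny\in E_1^{(n)}$ with $y\in E_0$ one has $|x-y|_n\le C_2\|B_n\|\,|y|+\varepsilon_n Q(B_ny)^{1/2}$, where $Q(B_ny)$ is bounded by your uniform estimate on $E_1^{(n)}$ and by $Q\le\lambda_N$ on $E_0$, while on the fixed finite-dimensional $E_0$ the pointwise convergence $|y|_n\to|y|$ is automatically uniform on bounded sets; combining gives $\|A_1^{(n)}-I\|\to 0$. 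Relatedly, in the last hypothesis of Lemma \ref{crit} the cross term must be handled as $Q(y,B_ny)=\langle Ay,B_ny\rangle$ with $\|Ay\|\le\lambda_N|y|$ for $y\in E_0$, since you only control $B_n$ in the $\mathcal H$-norm, not in the form norm; with that observation your use of $Q\le Q_n$ and $\|B_n\|\to0$ does close this step. Once these repairs are made (and in this order), the application of Lemma \ref{crit} is legitimate and the proof is complete.
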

	
	In most applications, we take $Q$ to be the  Laplacian on a Riemannian manifold (with specific boundary condition), and take  $\langle\ ,\ \rangle$ to be the $L^2$-inner product.

	\subsection{Stable metrics} \label{sm}
	
	\noindent
	
	The isomorphism (\ref{iso}) plays a crucial role in our argument. To simplify the description, we introduce the concept of stable metrics (following \cite{JL96}).
	
	\par Let $M,\ M'$ be two compact manifolds with piecewise smooth boundaries and let $B$ be a closed ball in $\mathbb{R}^m$.  Let $\mathcal{M}(M)$ and $\mathcal{M}(M')$ be the space of  Riemannian metrics on $M$ and $M'$ and consider two continuous families of Riemannian metrics 
	\[f:B\to\mathcal{M}(M) \ \text{and}\ F:B\to\mathcal{M}(M').\]
	For any $g\in\mathcal{M}(M)$, let $E_N(g)$ be the $N$-eigenspace of the Laplacian of $g$ with certain boundary conditions and $\mathcal{Q}_N(g)$ be the space of quadratic forms on $E_N(g)$. Fix a Riemannian metric $g_0$ on $M$ and consider the $L^2$-isometry
	\[i_p: (E_N(g_0), \langle , \rangle_{g_0})\to (E_N(f(p)), \langle , \rangle_{f(p)})\]
	constructed via (\ref{iso}) (which depends continuously on $p\in B$). Let  
	\[ U_p:H^1(M,f(p))\to H^1(M',F(p))\]
	be an  injective map
	using which we can pull-back the inner product on $U_p(E_N(f(p)))$ to $E_N(f(p))$. Composing with  (\ref{iso}), we get an $L^2$-isometry
	\[I_p^1:(E_N(f(p)), \langle , \rangle_{f(p)}) \stackrel{(\ref{iso})}{\to} (E_N(f(p)), U_p^*(\langle , \rangle_{F(p)})) \to  (U_p(E_N(f(p))), \langle , \rangle_{F(p)}),\]
	Also (\ref{iso}) gives rise to another $L^2$-isometry
	\[I_p^2:(U_p(E_N(f(p))), \langle , \rangle_{F(p)})\to (E_N(F(p)), \langle , \rangle_{F(p)}).\]
	Composing the above three isometries, we get an $L^2$-isometry
	\[I_p = I_p^2 \circ I_p^1 \circ i_p: (E_N(g_0), \langle , \rangle_{g_0}) \to (E_N(F(p)), \langle , \rangle_{F(p)}).\]
	
	Using the isometries $i_p$ and $I_p$, the quadratic forms associated with the Laplacian on $(M, f(p))$ and $(M', F(p))$ with domains $E_N(f(p))$ and $E_N(F(p))$, respectively, can be transformed into quadratic forms on $E_N(g_0)$. Consequently, we obtain two mappings from $B$ to $\mathcal{Q}_N(g_0)$:
	\[
	\Phi(f): B \to \mathcal{Q}_N(g_0),
	\]
	\[
	\Phi(F): B \to \mathcal{Q}_N(g_0).
	\]
	For example, if we consider the Laplacian with Robin boundary condition, then for any $p\in B$ and $\varphi\in E_N(g_0)$,
	\[
	[\Phi(f)(p)](\varphi)=\int_M |\nabla_{f(p)}i_p(\varphi)|^2 \mathrm d V_{f(p)}+\int_{\partial M} \rho\cdot i_p(\varphi)^2 \mathrm d \sigma_{f(p)}.
	\]
	
	Finally we write down the definition of stable metrics. For simplicity, suppose the ball  $B$ is centered at the origin.  
	\begin{defn}[\cite{JL96} Definition 2.1]
		Let $f:B\to\mathcal{M}(M)$ be a continuous family of Riemannian metrics  on $M$ with $f(0)=g_0$. We say $f$ is  a \textbf{stable} family  around $g_0$, if there exists an $\varepsilon=\varepsilon(f,g_0)>0$ such that
		for any continuous  family $F:B\to\mathcal{M}(M')$  of Riemannian metrics on $M'$  with \[\|\Phi(F)-\Phi(f)\|_{C^0(B)}<\varepsilon,\] there exists a point $p\in\mathrm{int}(B)$ with
		\[\Phi(F)(p)=\Phi(f)(0).\]
		We also call $g_0$ a \textbf{stable metric}, and say $F$ is \textbf{spectrally near} to $f$.
	\end{defn}
	
	\section*{Acknowledgment}
	
	\noindent {\bf Funding} The authors are partially supported by  National Key R and D Program of China 2020YFA0713100, and by NSFC no. 12171446. The first author is supported by the China Postdoctoral Science Foundation 2024M761591.

\end{document}